\numberwithin{equation}{section}
\numberwithin{figure}{section}
\theoremstyle{plain}
\newtheorem{thm}{\protect\theoremname}[section]
  \theoremstyle{plain}
  \newtheorem{prop}[thm]{\protect\propositionname}
  \theoremstyle{remark}
  \newtheorem*{rem*}{\protect\remarkname}
  \theoremstyle{plain}
  \newtheorem{cor}[thm]{\protect\corollaryname}
  \theoremstyle{plain}
  \newtheorem*{conjecture*}{\protect\conjecturename}
  \theoremstyle{remark}
  \newtheorem*{acknowledgement*}{\protect\acknowledgementname}
  \theoremstyle{plain}
  \newtheorem{fact}[thm]{\protect\factname}
  \theoremstyle{plain}
  \newtheorem{lem}[thm]{\protect\lemmaname}
  \theoremstyle{definition}
  \newtheorem{defn}[thm]{\protect\definitionname}
\newcommand{\e}{\mathrm e}
\renewcommand{\phi}{\varphi}
\renewcommand{\tilde}{\widetilde}
\DeclareMathOperator{\id}{id}
\DeclareMathOperator{\supp}{supp}
\DeclareMathOperator{\card}{card}
\DeclareMathOperator{\growth}{growth}
\def\R{\mathbb{R}}
\def\N{\mathbb{N}}
\numberwithin{equation}{section} %% Comment out for sequentially-numbered
\numberwithin{figure}{section} %% Comment out for sequentially-numbered
  \providecommand{\acknowledgementname}{Acknowledgement}
  \providecommand{\conjecturename}{Conjecture}
  \providecommand{\corollaryname}{Corollary}
  \providecommand{\definitionname}{Definition}
  \providecommand{\factname}{Fact}
  \providecommand{\lemmaname}{Lemma}
  \providecommand{\propositionname}{Proposition}
  \providecommand{\remarkname}{Remark}
\providecommand{\theoremname}{Theorem}
\begin{document}
\subjclass[2010]{Primary 20F69, 05C50; Secondary 20E08, 30F40}

\keywords{growth tight, cogrowth, Poincar\'e exponent, discrete Laplacian, bottom of spectrum, isoperimetric constant, planar graph}

\title{Growth and cogrowth of normal subgroups of a free group}

\author{Johannes Jaerisch}

\author{Katsuhiko Matsuzaki}
\begin{abstract}
We give a sufficient condition for a sequence of normal subgroups
of a free group to have the property that both, their growths tend
to the upper bound and their cogrowths tend to the lower bound. The
condition is represented by planarity of the quotient graphs of the
tree.
\end{abstract}

\maketitle

\section{Introduction and statements of result}

We denote by $F_{n}$ the free group of rank $n\ge2$ with a free
set of generators $S$. Let $T_{n}$ denote the Cayley graph of $F_{n}$
with respect to $S$. We equip $T_{n}$ with the word metric $d$.
Let $G<F_{n}$ be a subgroup of $F_{n}$. The Poincar\'{e} exponent
$\delta(G)$ of $G$ is given by 
\[
\delta(G):=\limsup_{R\rightarrow\infty}\frac{1}{R}\log N_{G}(R),\quad\mbox{where }N_{G}(R):=\card\left\{ g\in G\mid d(\id,g)\le R\right\} .
\]
It is well known that $\delta(G)$ is given by the exponent of convergence
of the Poincar\'{e} series 
\[
\delta(G)=\inf\biggl\{ s>0\big|\sum_{g\in G}\e^{-sd(\id,g)}<\infty\biggr\}.
\]
For a normal subgroup $\left\{ 1\right\} \neq N\triangleleft F_{n}$
the ratio 
\[
\eta(F_{n}/N):=\frac{\delta(N)}{\delta(F_{n})}=\frac{\delta(N)}{\log(2n-1)}
\]
is known as the cogrowth of the group presentation $F_{n}/N$, which
was introduced by Grigorchuk (\cite{MR599539}). We have $\eta(F_{n}/N)\le1$
and by a well-known result of Grigorchuk (\cite{MR599539}) we have
that $\eta(F_{n}/N)=1$ if and only if $F_{n}/N$ is amenable. This
criterion is deduced by combining Grigorchuk's cogrowth formula (\cite{MR599539},
see (\ref{eq:cogrowth-EPS}) below) and results of Kesten on random
walks on countable groups (\cite{MR0109367,MR0112053}, see also Cohen
\cite{MR678175}). 

In this paper we focus on Grigorchuk's lower bound of the cogrowth
$\eta(F_{n}/N)>1/2$ (\cite{MR599539}). We have the following more
general results due to Roblin (\cite{MR2166367}). We say that $G<F_{n}$
is of divergence type if $\sum_{g\in G}\e^{-\delta(G)d(\id,g)}=\infty$. 
\begin{prop}
[\cite{MR2166367}] \label{prop:lowerbounds} Let $G<F_{n}$ and $\left\{ 1\right\} \neq N\triangleleft G$.
Then we have the following:
\begin{enumerate}
\item $\delta(N)\ge\delta(G)/2$. 
\item If $G$ is of divergence type, then $\delta(N)>\delta(G)/2$. 
\end{enumerate}
\end{prop}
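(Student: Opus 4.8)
The plan is to follow Roblin's conjugation argument, settling (1) directly and obtaining (2) by a refinement into which the divergence hypothesis feeds. Throughout write $|g|:=d(\id,g)$. Part (1) is vacuous when $\delta(G)=0$, since then $\delta(N)\ge 0=\delta(G)/2$, so the content is the non-elementary case $\delta(G)>0$; for (2) I take $G$ to be non-elementary, as is standard in this context.

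For (1), fix $a\in N\setminus\{1\}$ and consider the conjugation map $\phi\colon G\to N$, $\phi(g)=gag^{-1}$, which lands in $N$ by normality. Two estimates drive the argument. First, the triangle inequality gives $|\phi(g)|=|gag^{-1}|\le 2|g|+|a|$, so $\phi$ maps $B_G(R):=\{g\in G:|g|\le R\}$ into $\{n\in N:|n|\le 2R+|a|\}$. Second, $\phi(g)=\phi(g')$ holds iff $g^{-1}g'$ centralizes $a$; since the centralizer of a nontrivial element of a free group is the infinite cyclic maximal cyclic subgroup containing it, $C_G(a):=G\cap C_{F_n}(a)$ is cyclic, and a coset of a cyclic subgroup meets $B_G(R)$ in at most linearly many points, so every fiber satisfies $\card\big(\phi^{-1}(y)\cap B_G(R)\big)\le c_1R+c_2$. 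Combining,
\[
N_N(2R+|a|)\ \ge\ \card\,\phi\big(B_G(R)\big)\ \ge\ \frac{N_G(R)}{c_1R+c_2}.
\]
Taking $\tfrac1{2R+|a|}\log$ of both sides and letting $R\to\infty$, the polynomial factor disappears in the exponent while $\tfrac{R}{2R+|a|}\to\tfrac12$ converges, so $\delta(N)\ge\limsup_R\tfrac{1}{2R+|a|}\log N_G(R)=\tfrac12\delta(G)$, which is (1).

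For (2) the goal is to upgrade $\ge$ to $>$, and the single conjugate $a$ no longer suffices: the bound $|gag^{-1}|\approx 2|g|$ is sharp for generic $g$, which is precisely why the naive estimate stops at $\delta(G)/2$. The plan is instead to build a free sub-semigroup of $N$ out of many conjugates. Since $\{1\}\ne N\triangleleft G$ with $G$ non-elementary, the limit set of $N$ equals that of $G$ by minimality, so $N$ contains a hyperbolic element $b$; fix a large power $b^m$, with axis $B$ and translation length $m\beta$. The aim is to select a family $E\subseteq G$ so that the conjugates $h_g:=gb^mg^{-1}\in N$ $(g\in E)$ form a Schottky family, whence words $h_{g_1}\cdots h_{g_k}$ are distinct with nearly additive length $\sum_i|h_{g_i}|\le\sum_i(2|g_i|+m\beta)$. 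Writing $\Theta_E(s)=\sum_{g\in E}\e^{-s|g|}$, such a free semigroup yields
\[
\sum_{n\in N}\e^{-s|n|}\ \ge\ \sum_{k\ge1}\Big(\e^{-sm\beta}\,\Theta_E(2s)\Big)^{k},
\]
which diverges as soon as $\e^{-sm\beta}\Theta_E(2s)\ge1$. If $E$ can be chosen to retain both the exponent $\delta(G)$ and divergence type, then $\Theta_E(2s)\to\infty$ as $s\downarrow\delta(G)/2$, so $\e^{-sm\beta}\Theta_E(2s)>1$ on an interval $(\delta(G)/2,\delta(G)/2+\varepsilon)$; the series for $N$ then diverges there and $\delta(N)\ge\delta(G)/2+\varepsilon>\delta(G)/2$.

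The main obstacle is exactly the selection of $E$: one must extract from $G$ a family whose conjugate-axes are pairwise well separated, so that ping-pong applies and the length estimate is genuinely additive, while preserving both the exponent $\delta(G)$ and the divergence of the Poincar\'e series. This is the one step that cannot be soft, and it is where the divergence hypothesis is indispensable, since it is what guarantees exponentially many conjugates with separated axes together with a divergent $\Theta_E$ at $\delta(G)$. In the tree setting the required separation and counting are controlled through the Patterson--Sullivan density of $G$ of dimension $\delta(G)$, whose shadow lemma is free of polynomial corrections precisely for divergent groups; converting the divergence of $\sum_{g\in G}\e^{-\delta(G)|g|}$ into such a divergent Schottky subfamily (equivalently, controlling the \emph{average} cancellation in $|gb^mg^{-1}|$) is the technical heart, and is Roblin's contribution.
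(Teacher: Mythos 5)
The paper offers no proof of this proposition: it is imported from Roblin \cite{MR2166367}, and the remark following it only points to alternative sources for assertion (1). So there is nothing in the paper to compare against, and your argument must stand on its own. Part (1) does stand. The conjugation map $g\mapsto gag^{-1}$ into $N$, the bound $d(\id,gag^{-1})\le 2d(\id,g)+d(\id,a)$, and the fiber bound are all correct: each fiber is a left coset of $C_G(a)=G\cap C_{F_n}(a)$, which is cyclic because centralizers of nontrivial elements of a free group are infinite cyclic, and if $gc^{k_1},gc^{k_2}$ both lie in $B_G(R)$ then $d(\id,c^{k_1-k_2})\le 2R$, forcing $|k_1-k_2|\le 2R$; hence each fiber meets $B_G(R)$ in at most $2R+1$ points and $N_N(2R+|a|)\ge N_G(R)/(2R+1)$. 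The limsup computation then gives $\delta(N)\ge\delta(G)/2$. This is a complete proof of (1).

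Part (2) has a genuine gap, and you name it yourself. The entire argument hinges on extracting a subfamily $E\subset G$ such that the conjugates $gb^mg^{-1}$, $g\in E$, generate a free semigroup with essentially additive word length, while $E$ retains both the critical exponent $\delta(G)$ and the divergence of its Poincar\'e series at $\delta(G)$. The surrounding reductions are sound: granted freeness and the length estimate, the geometric-series lower bound is valid, and divergence of the restricted series at the exponent does give $\Theta_E(2s)\to\infty$ as $s\downarrow\delta(G)/2$, which beats the bounded factor $\e^{sm\beta}$ on an interval above $\delta(G)/2$ and yields $\delta(N)>\delta(G)/2$. But the construction of $E$ --- the shadow-lemma counting for the Patterson--Sullivan density of a divergence-type group, or a direct combinatorial separation argument in the tree --- is the whole content of assertion (2), and declaring it ``Roblin's contribution'' leaves the proof incomplete precisely where the divergence hypothesis must be used. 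A smaller point worth keeping: as literally stated, (2) fails for cyclic $G$, which is of divergence type with $\delta(G)=0=\delta(N)$, so the non-elementarity assumption you insert is genuinely needed and should be made explicit in any write-up.
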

\begin{rem*}
 For Kleinian groups the first assertion was proved by Falk and Stratmann
in \cite{MR2097162}. An alternative proof of (1) can be given by
following the arguments in \cite{Jaerisch12a}.  An ergodic-theoretic
proof of $\delta(N)>\delta(F_{n})/2$ was recently given in \cite{Jaerisch12b}.
Results in a more general setting have been also obtained in \cite{MYJ2015}.
\end{rem*}
Another important notion we consider is the growth of graphs. By a
graph we mean an unoriented graph with countable vertex set, bounded
vertex degree and loops as well as multiple edges allowed. For a connected
graph $\Gamma$ with path metric $d$ and for some/any $\gamma_{0}\in\Gamma$,
 the growth is given by 
\[
\growth(\Gamma)=\limsup_{n\rightarrow\infty}(\card\{\gamma\in\Gamma\mid d(\gamma,\gamma_{0})\le n\})^{1/n}.
\]
Let $H<F_{n}$ be a subgroup. We consider the action of $H$ on $T_{n}$
by left-multiplication. We denote by $\Gamma_{H}:=T_{n}/H$ the quotient
graph with vertex set 
\[
\mathcal{V}(\Gamma_{H}):=\left\{ Hg\mid g\in F_{n}\right\} ,
\]
where $Hx$ and $Hy\in\mathcal{V}(\Gamma_{H})$ are connected by an
edge if and only if there exists $s\in S$ such that $Hxs=Hy$ or
$Hys=Hx$. 

The following result on growth tightness of $F_{n}$ was proved in
\cite{MR1436550}. 
\begin{prop}
[\cite{MR1436550}] Let $N$ be a normal subgroup of $F_{n}$ with
infinite index. Then 
\[
\growth(\Gamma_{N})<\growth(T_{n})=2n-1.
\]

\end{prop}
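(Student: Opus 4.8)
The plan is to bound the growth of $\Gamma_N$ by counting \emph{minimal-length} coset representatives and to show that every such representative avoids one fixed finite pattern; a symbolic-dynamics input then forces the exponential rate strictly below $2n-1$. We may assume $N\neq\{1\}$, since $N=\{1\}$ gives $\Gamma_N=T_n$ (and only $N\neq\{1\}$ will actually be used: when $[F_n:N]<\infty$ the graph $\Gamma_N$ is finite and has growth $1$). Write $\pi\colon F_n\to F_n/N$ for the quotient map and identify $\Gamma_N$ with the Cayley graph of $F_n/N$ with respect to $\pi(S)$, so that the distance in $\Gamma_N$ from the base vertex $\gamma_0=\pi(\id)$ to $\pi(g)$ is the minimal length over $S$ of a word representing $\pi(g)$. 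A minimal such word is automatically reduced, since deleting a cancellation $ss^{-1}$ shortens it without changing the element; hence it is a vertex of $T_n$ at the corresponding distance. Letting $\mathcal R(R)$ be the set of reduced words of length $R$ that are minimal in their coset, the assignment (coset at distance $R$) $\mapsto$ (a chosen minimal representative) is injective, so $\card\{\gamma\in\Gamma_N\mid d(\gamma,\gamma_0)=R\}\le\card\mathcal R(R)$, and it suffices to control the exponential growth of $\card\mathcal R(R)$.

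Next I would manufacture a bounded-length shortcut from the nontriviality of $N$. Choose a nontrivial reduced word $w=x_1\cdots x_\ell\in N$ with $\ell\ge1$, and split it as $p=x_1\cdots x_k$ and $q=(x_{k+1}\cdots x_\ell)^{-1}$ with $k=\lceil(\ell+1)/2\rceil$, so that $p$ and $q$ are reduced words with $|q|=\ell-k<k=|p|$. Since $\pi(w)=1$ we have $\pi(p)=\pi(q)$. The key observation is that \emph{no word in $\mathcal R(R)$ contains $p$ as a subword}: if a reduced word $u=\alpha p\beta$ contained $p$, then, using that normality of $N$ makes $\pi$ a homomorphism, $\pi(\alpha q\beta)=\pi(\alpha)\pi(q)\pi(\beta)=\pi(\alpha)\pi(p)\pi(\beta)=\pi(\alpha p\beta)$, so $\alpha q\beta$ represents the same coset while having length at most $|\alpha|+|q|+|\beta|=|u|-(|p|-|q|)<|u|$; thus $u$ would not be minimal. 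It is precisely here that normality is essential: for a non-normal subgroup the local substitution $p\rightsquigarrow q$ inside an arbitrary context $\alpha,\beta$ need not preserve the coset. Hence every minimal representative lies in the set $L_p$ of reduced words \emph{avoiding} the fixed factor $p$.

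It remains to show that the reduced words avoiding $p$ grow at a rate strictly below $2n-1$, and this is where the real work sits. The reduced words over $S$ form an irreducible subshift of finite type $X$ (forbid the length-two words $ss^{-1}$, $s\in S$), whose topological entropy is $\log(2n-1)=\log\growth(T_n)$, the logarithm of the Perron eigenvalue of the $(2n)\times(2n)$ transition matrix with rows summing to $2n-1$. The language $L_p$ is exactly the subshift of finite type $Y\subseteq X$ obtained by additionally forbidding the factor $p$, and $Y$ is a \emph{proper} subshift since $p$ itself extends to a point of $X$. The standard fact that a proper subshift of finite type of an irreducible subshift of finite type has strictly smaller entropy (equivalently, a Perron--Frobenius estimate on the transfer matrix of $Y$) gives $\growth(L_p)=:\lambda<2n-1$. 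Summing the resulting bound $\card\mathcal R(R)\le\card\{u\in L_p:|u|=R\}\le C\lambda^R$ over radii yields $\card\{\gamma\in\Gamma_N\mid d(\gamma,\gamma_0)\le R\}\le C'\lambda^R$, whence $\growth(\Gamma_N)\le\lambda<2n-1$. The delicate point is this strict entropy drop for $Y\subsetneq X$: combined with the homomorphism property, it is what upgrades the mere \emph{existence} of a single relation in $N$ into a uniform exponential deficit in every ball of $\Gamma_N$, the relation being applicable at every position of every word.
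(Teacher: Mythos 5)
Your proposal is correct, but note that the paper does not prove this proposition at all: it is quoted from Grigorchuk--de la Harpe \cite{MR1436550}, and the paper's own isoperimetric machinery (Section 3) is aimed at the quantitative statements of Theorem \ref{thm:main}, not at reproving growth tightness. So you have supplied a self-contained proof where the paper defers to the literature, and your argument is essentially the classical one (in the spirit of \cite{MR1436550} and of Arzhantseva--Lysenok \cite{growthtight2002}): split a nontrivial relator $w=pq^{-1}$ with $|q|<|p|$, use normality to substitute $q$ for $p$ inside any context without changing the coset, conclude that minimal coset representatives are reduced words avoiding the factor $p$, and invoke the Perron--Frobenius strict-drop for a principal proper submatrix of the irreducible transition matrix of the reduced-word shift. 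All the steps check out, including the key injectivity of (coset) $\mapsto$ (minimal representative) and the fact that $q$ is again reduced. Two small remarks. First, the set $L_p$ of reduced words avoiding $p$ is the set of \emph{locally} admissible words, which need not coincide with the language of the subshift $Y$ (factors of bi-infinite points); this does not hurt you, because the transfer-matrix count of paths in the pruned higher-block graph bounds exactly the locally admissible words, and deleting the vertex $p$ from that irreducible graph already gives spectral radius strictly below $2n-1$ -- but you should phrase the entropy-drop step in those terms rather than as an identification of $L_p$ with the language of $Y$. Second, you correctly observe that the operative hypothesis is $N\neq\{1\}$ rather than infinite index (for $N=\{1\}$ the stated inequality fails, and for finite index it is trivial); this matches the intended reading of the cited result, where $F_n/N$ is a proper infinite quotient.
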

A generalization of the growth tightness to hyperbolic groups was
obtained by Arzhantseva and Lysenok in \cite{growthtight2002}. 

The main result of this paper is the following, which gives a sufficient
condition on a sequence of normal subgroups of $F_{n}$ under which
both the growth and the cogrowth converge to their bounds simultaneously.
Recall that a graph is called planar if there exists an embedding
in the sphere. The condition for a finite graph to be planar is known
as Wagner's theorem (\cite{planer1937}). 
\begin{thm}
\label{thm:main}Let $N_{k}\triangleleft F_{n}$, $k\in\N$, be a
sequence of normal subgroups such that $\Gamma_{N_{k}}$ is planar.
Let $\ell_{k}:=\min\left\{ \ell\in\N\mid\exists h\in N_{k},\,\,d(\id,h)=\ell\right\} $.
If $\ell_{k}\rightarrow\infty$, as $k\rightarrow\infty$, then we
have 
\[
\lim_{k\rightarrow\infty}\delta(N_{k})=\frac{1}{2}\delta(F_{n})\quad\mbox{and}\quad\lim_{k\rightarrow\infty}\growth(\Gamma_{N_{k}})=\growth(T_{n}).
\]

\end{thm}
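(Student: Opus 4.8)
The plan is to reformulate both limits as quantitative statements that the quotient graphs $\Gamma_{N_k}$ become ``increasingly tree-like'', and to extract this from planarity via Euler's formula. First I would note that the girth of $\Gamma_{N_k}$ equals $\ell_k$: a cycle through the base vertex $N_k$ is a reduced word $w\neq 1$ with $w\in N_k$ of length $d(\id,w)$, and since $\Gamma_{N_k}$ is the (vertex-transitive) Cayley graph of $F_n/N_k$ this shortest cycle realizes the girth; thus $\ell_k\to\infty$ means the girth tends to infinity. Recall also that the inequalities $\delta(N_k)\ge\tfrac12\delta(F_n)$ (Proposition \ref{prop:lowerbounds}(1)) and $\growth(\Gamma_{N_k})\le\growth(T_n)=2n-1$ hold automatically, so only the reverse, asymptotic inequalities require proof.

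The engine is the following consequence of planarity: for a finite subgraph on $V$ vertices of a planar graph all of whose cycles have length at least $g$, Euler's formula yields $E\le\frac{g}{g-2}(V-2)$. Since $\Gamma_{N_k}$ is $2n$-regular, the edge boundary of a finite set $S$ is $|\partial_E S|=2n|S|-2E(S)$, with $E(S)$ the number of internal edges; bounding $E(S)$ by the Euler estimate gives the edge-isoperimetric bound
\[
\iota_E(\Gamma_{N_k}):=\inf_{S}\frac{|\partial_E S|}{|S|}\ \ge\ 2n-\frac{2\ell_k}{\ell_k-2}\ \xrightarrow{\ k\to\infty\ }\ 2n-2 .
\]
Applying the same estimate simultaneously to $S$ and to its closed neighbourhood $\bar S=S\cup\partial_V S$, and using that every edge incident to $S$ lies in $\bar S$, yields the vertex-isoperimetric bound $\iota_V(\Gamma_{N_k})\ge(2n-2)-\frac{4n}{\ell_k}\to 2n-2$. (The same Euler count also forces $\Gamma_{N_k}$ to be infinite once $\ell_k$ is large, so the growth is genuinely exponential and the statement is not vacuous.)

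For the cogrowth limit I would pass through the discrete Laplacian. By Grigorchuk's formula (\ref{eq:cogrowth-EPS}) the spectral radius of the simple random walk on $\Gamma_{N_k}$ is $\rho_k=\frac{1}{2n}\bigl(\e^{\delta(N_k)}+(2n-1)\e^{-\delta(N_k)}\bigr)$, which is strictly increasing in $\delta(N_k)$ on $[\tfrac12\log(2n-1),\log(2n-1)]$ and attains its minimum $\rho(T_n)=\frac{\sqrt{2n-1}}{n}$ exactly at $\delta=\tfrac12\delta(F_n)$; hence it suffices to prove $\rho_k\to\rho(T_n)$. Here I would invoke the sharp spectral--isoperimetric inequality $\rho_k\le\sqrt{1-(\iota_E(\Gamma_{N_k})/2n)^2}$, which follows from the co-area formula together with one Cauchy--Schwarz step applied to $\sum_{x\sim y}|f(x)^2-f(y)^2|$ and which is tight for the tree. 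Combined with the edge bound this gives $\limsup_k\rho_k\le\rho(T_n)$; since $\rho_k\ge\rho(T_n)$ always (the tree is the universal cover), $\rho_k\to\rho(T_n)$, and the monotone formula returns $\delta(N_k)\to\tfrac12\delta(F_n)$.

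For the growth limit I would use the elementary propagation inequality $\growth(\Gamma)\ge 1+\iota_V(\Gamma)$, obtained from $|B(r+1)|=|B(r)|+|\partial_V B(r)|\ge(1+\iota_V)|B(r)|$. The vertex-isoperimetric bound then gives $\growth(\Gamma_{N_k})\ge 1+\iota_V(\Gamma_{N_k})\to 2n-1$, which together with the trivial upper bound closes the argument. The main obstacle throughout is \emph{sharpness}: the crude Cheeger and edge estimates only detect non-amenability, with all constants bounded away from the extremal tree values. The two places that demand real care are therefore the square-root spectral inequality, which must be tight precisely at the tree, and the passage from the edge- to the vertex-isoperimetric constant, where applying Euler's formula to the closed neighbourhood $\bar S$ (rather than estimating boundary-edge multiplicities by hand) is exactly what makes $\iota_V\to 2n-2$ emerge with the correct constant.
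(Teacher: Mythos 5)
Your proof is correct and follows essentially the same route as the paper: planarity together with $\ell_k\to\infty$ yields, via Euler's formula on the sphere, that the isoperimetric constant of $\Gamma_{N_k}$ tends to that of the tree, after which Mohar's Cheeger-type inequality combined with Grigorchuk's formula gives the cogrowth limit and an isoperimetric lower bound on growth gives the growth limit. The only differences are cosmetic: the paper reduces to core subgraphs rather than applying the girth--Euler bound directly to induced subgraphs (note that your stated bound $E\le\frac{g}{g-2}(V-2)$ fails for forests, though those cases trivially satisfy the required isoperimetric estimate), and it invokes Mohar's edge-isoperimetric bound $\growth(\Gamma)\ge\frac{1+i(\Gamma)}{1-i(\Gamma)}$ in place of your vertex-isoperimetric argument.
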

We see that a particular sequence of normal subgroups satisfies the
above condition. For cogrowth this was proved by Grigorchuk (\cite{MR599539}),
and for growth this follows from a result by Shukhov (\cite{shukhov1999}).
For $g_{1},\dots,g_{s}\in F_{n}$ we denote by $\langle\langle g_{1},\dots,g_{s}\rangle\rangle$
the normal closure of $\left\{ g_{1},\dots,g_{s}\right\} $ in $F_{n}$. 
\begin{cor}
Let $F_{n}=\left\langle g_{1},\dots,g_{n}\right\rangle $ and let
$N_{k}:=\langle\langle g_{1}^{k_{1}},g_{2}^{k_{2}},\dots,g_{s}^{k_{s}}\rangle\rangle\vartriangleleft F_{n}$,
$s\in\N$, and put $k=\min\left\{ k_{i}\mid1\le i\le s\right\} $.
Then we have 
\[
\lim_{k\rightarrow\infty}\delta(N_{k})=\frac{1}{2}\delta(F_{n})\quad\mbox{and}\quad\lim_{k\rightarrow\infty}\growth(\Gamma_{N_{k}})=\growth(T_{n}).
\]
\end{cor}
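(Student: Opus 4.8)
The plan is to obtain the corollary as a direct application of Theorem \ref{thm:main} to the sequence $(N_k)$; everything reduces to checking its two hypotheses, planarity of $\Gamma_{N_k}$ and $\ell_k\to\infty$, for the normal closures $N_k=\langle\langle g_1^{k_1},\dots,g_s^{k_s}\rangle\rangle$.

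First I would identify the quotient graph explicitly. Since $N_k$ is normal, the vertex set $\{N_k g\}$ is the quotient group $G_k:=F_n/N_k$, and the edge rule $N_kxs=N_ky$ exhibits $\Gamma_{N_k}$ as the Cayley graph of $G_k$ with respect to the images of $S=\{g_1,\dots,g_n\}$. Because $N_k$ is the normal closure of the single powers $g_i^{k_i}$ in pairwise distinct free generators, the quotient is the free product
\[
G_k\cong(\Z/k_1\Z)*\cdots*(\Z/k_s\Z)*F_{n-s},
\]
with $g_1,\dots,g_s$ generating the cyclic factors and $g_{s+1},\dots,g_n$ the free factor.

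The heart of the argument is planarity, and this is the step I expect to require the most care. I would use the block structure of a free-product Cayley graph. For each factor its left cosets partition $G_k$, and the subgraph spanned by a coset $g\langle g_i\rangle$ with $i\le s$ is a cycle $C_{k_i}$, while a coset of $F_{n-s}$ spans a tree. The key point is that two copies coming from different factors share at most one vertex: if $x,y\in gA\cap g'B$ for distinct factors $A,B$, then $x^{-1}y\in A\cap B=\{1\}$ in the free product, forcing $x=y$. Hence the blocks (maximal $2$-connected subgraphs) of $\Gamma_{N_k}$ are precisely the cycles $C_{k_i}$ together with the individual edges contributed by the free generators, glued along cut vertices in a tree pattern. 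Since every block is planar and a graph is planar if and only if each of its blocks is, $\Gamma_{N_k}$ is planar.

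For the remaining hypothesis I would note that $\ell_k$ is exactly the girth of $\Gamma_{N_k}$: a shortest nontrivial element of $N_k$ is a shortest reduced word representing the identity in $G_k$, i.e.\ a shortest nonbacktracking closed loop at $\id$, hence a shortest cycle. By the block description every cycle sits inside some $C_{k_i}$, so the girth equals $\min_{1\le i\le s}k_i=k$. Thus $\ell_k=k\to\infty$, and Theorem \ref{thm:main} delivers both displayed limits at once, recovering simultaneously the cogrowth statement of Grigorchuk and the growth statement obtained from Shukhov.
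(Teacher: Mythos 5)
Your proposal is correct in substance but follows a genuinely different route from the paper. The paper realizes $\Gamma_{F_n}$ as a wedge of loops embedded in the $(n+1)$-punctured sphere, identifies $F_n$ with its fundamental group so that the $g_i$ correspond to mutually disjoint simple closed curves, and invokes Maskit's planarity theorem (\cite[Proposition X.A.3]{MR959135}) to conclude that the normal covering surface determined by $\langle\langle g_1^{k_1},\dots,g_s^{k_s}\rangle\rangle$ is planar; $\Gamma_{N_k}$ is then the lift of the embedded wedge to that planar surface. You instead identify $\Gamma_{N_k}$ with the Cayley graph of the free product $(\Z/k_1\Z)*\cdots*(\Z/k_s\Z)*F_{n-s}$ and read off planarity from its block structure; this is more elementary and self-contained, and it has the added benefit of computing the girth, which lets you verify the hypothesis $\ell_k\to\infty$ explicitly --- a point the paper's proof passes over in silence. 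Two steps in your argument deserve more care than you give them. First, the fact that distinct coset subgraphs meet in at most one vertex does not by itself show that they are the blocks: three pieces pairwise sharing three \emph{distinct} cut vertices would still form a $2$-connected union. What you actually need is that the nerve of the covering by cosets is a tree, which is the normal form theorem for free products (every nonempty reduced alternating word is nontrivial); with that, every simple cycle is confined to a single coset and the block decomposition follows. Second, ``planar if and only if every block is planar'' is standard for finite graphs, but $\Gamma_{N_k}$ is infinite, so you should either invoke the Dirac--Schuster theorem (a countable graph is planar if and only if each of its finite subgraphs is) or construct the embedding of the tree of cycles directly; similarly, your identification of $\ell_k$ with the girth should note that a minimal-length element of $N_k$ is cyclically reduced (by normality) and that a non-backtracking closed walk cannot live in a forest, hence has length at least the girth. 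All of these gaps are routine to fill, so the proof stands; the trade-off is that the paper's surface-theoretic route is more flexible, applying to normal closures of powers of any elements represented by disjoint simple closed curves rather than only of the free generators themselves.
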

\begin{proof}
We have to verify that $\Gamma_{N}$ is planar for $N=N_{k}$. Then
the corollary follows from Theorem \ref{thm:main}. To prove this consider the graph $\Gamma_{F_n}=T_n/F_n$, which consists of one vertex and $n$ edges that are the loops based at the vertex. We embed this graph into $(n+1)$-punctured sphere $S$ so that each loop of $\Gamma_{F_n}$ wraps around a different puncture and hence the inclusion map induces the isomorphism  $\theta:\pi_1(\Gamma_{F_n})=F_n \to \pi_1(S)$ between their fundamental groups.
By \cite[Proposition X.A.3]{MR959135},  if $w_1,\ldots, w_n \in \pi_1(S)$ correspond to mutually disjoint non-trivial simple closed curves in $S$, then the normal closure $H=\langle \langle w_1^{k_1},\ldots, w_n^{k_n} \rangle \rangle$ for any $k_1,\ldots, k_n \in \mathbb N$ defines the normal covering surface $\widetilde S$ that is planar, namely, all simple closed curves are dividing. Note that any planar surface can be embedded into the sphere.
Set $N=\theta^{-1}(H)<F_n$. By the covering projection $p:\widetilde S \to S$, we lift the embedded graph $\Gamma_{F_n} \subset S$ to $\widetilde S$. Then the lifted graph is $\Gamma_N$. Since $\widetilde S$ is planar, we see that $\Gamma_N$ is a planar graph.
\end{proof}
For the more general case of $G<F_{n}$ we have the following immediate
consequence of our results.
\begin{cor}
Let $G=\left\langle g_{i}:i\in I\right\rangle $ be a subgroup of
$F_{n}$ such that $\delta(G)=\delta(F_{n})$. Then there exists a
sequence of normal subgroups $N_{k}\triangleleft G$, $k\in\N$, such
that 
\[
\lim_{k\rightarrow\infty}\delta(N_{k})=\frac{1}{2}\delta(G).
\]
 \end{cor}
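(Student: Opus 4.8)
The plan is to deduce the statement by a sandwich argument, combining the previous corollary (which supplies good normal subgroups of $F_{n}$), the growth tightness of $F_{n}$, and the lower bound of Proposition \ref{prop:lowerbounds}(1). The point to exploit is the hypothesis $\delta(G)=\delta(F_{n})=\log(2n-1)$, which makes the target $\tfrac12\delta(G)$ equal to $\tfrac12\delta(F_{n})$, a value we can already approximate from within $F_{n}$.

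Concretely, I would first invoke the previous corollary with $s=n$ and $k_{1}=\dots=k_{n}=k$ to obtain a sequence $M_{k}\triangleleft F_{n}$ with $\Gamma_{M_{k}}$ planar, $\delta(M_{k})\to\tfrac12\delta(F_{n})$ and $\growth(\Gamma_{M_{k}})\to\growth(T_{n})=2n-1$. I then set $N_{k}:=M_{k}\cap G$; since $M_{k}\triangleleft F_{n}$ and $G<F_{n}$, the intersection $N_{k}$ is normal in $G$, so these are the candidate subgroups. The upper bound is then immediate from monotonicity of $\delta$: because $N_{k}\subseteq M_{k}$ we have $N_{N_{k}}(R)\le N_{M_{k}}(R)$ for every $R$, whence $\delta(N_{k})\le\delta(M_{k})\to\tfrac12\delta(G)$.

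The hard part, and the only real content, is to show that $N_{k}=M_{k}\cap G$ is nontrivial for all large $k$, so that the lower bound of Proposition \ref{prop:lowerbounds}(1) applies. I would argue by contradiction: if $M_{k}\cap G=\{1\}$, then the quotient homomorphism $q_{k}\colon F_{n}\to F_{n}/M_{k}$ restricts to an injection on $G$. Since the Cayley graph of $F_{n}/M_{k}$ with respect to $q_{k}(S)$ is exactly $\Gamma_{M_{k}}$, the map $q_{k}$ does not increase word length, so every $g\in G$ with $d(\id,g)\le R$ is sent to a distinct vertex of $\Gamma_{M_{k}}$ within distance $R$ of the base point. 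Hence $N_{G}(R)$ is bounded by the ball-counting function of $\Gamma_{M_{k}}$, which gives $\delta(G)\le\log\growth(\Gamma_{M_{k}})$. On the other hand, since $\growth(\Gamma_{M_{k}})\to 2n-1>1$, the subgroup $M_{k}$ has infinite index for all large $k$, so the growth-tightness result of \cite{MR1436550} yields the strict inequality $\growth(\Gamma_{M_{k}})<2n-1$, that is, $\log\growth(\Gamma_{M_{k}})<\delta(F_{n})=\delta(G)$. Together these give $\delta(G)\le\log\growth(\Gamma_{M_{k}})<\delta(G)$, a contradiction; therefore $M_{k}\cap G\neq\{1\}$.

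Finally I would assemble the sandwich: for all large $k$, Proposition \ref{prop:lowerbounds}(1) applied to the nontrivial $N_{k}\triangleleft G$ gives $\delta(N_{k})\ge\tfrac12\delta(G)$, while the second step gives $\delta(N_{k})\le\delta(M_{k})\to\tfrac12\delta(G)$. Hence $\lim_{k\to\infty}\delta(N_{k})=\tfrac12\delta(G)$, as required (defining $N_{k}$ arbitrarily for the finitely many small $k$ does not affect the limit). I expect the length/growth comparison underlying the nontriviality step, which converts the equality $\delta(G)=\delta(F_{n})$ into a contradiction with growth tightness, to be the crux; the remaining ingredients are monotonicity of $\delta$ and direct citation of the stated results.
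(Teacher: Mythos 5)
Your proof is correct, but it resolves the one genuinely delicate point---nontriviality of the intersection---by a different mechanism than the paper. The paper does not reuse the subgroups of the preceding corollary at all: it sets $\tilde{N}_{k}:=\langle\langle g_{1}^{k}\rangle\rangle$ where $g_{1}$ is a generator of $G$ itself, so that $g_{1}^{k}\in\tilde{N}_{k}\cap G$ and nontriviality is immediate, and the hypothesis $\delta(G)=\delta(F_{n})$ enters only in the last line to identify $\delta(F_{n})/2$ with $\delta(G)/2$. You instead keep $M_{k}=\langle\langle g_{1}^{k},\dots,g_{n}^{k}\rangle\rangle$ built from the free generators and rule out $M_{k}\cap G=\{1\}$ by contradiction: the coset map $F_{n}\to F_{n}/M_{k}$ is $1$-Lipschitz and is injective on $G$ exactly when $M_{k}\cap G=\{1\}$, which would force $\delta(G)\le\log\growth(\Gamma_{M_{k}})$, while growth tightness (the quotient has infinite index for large $k$ since $\growth(\Gamma_{M_{k}})\to 2n-1>1$) gives $\log\growth(\Gamma_{M_{k}})<\delta(F_{n})=\delta(G)$. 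That argument is sound, and it buys you something: you only ever apply Theorem \ref{thm:main} to subgroups whose planarity was actually verified in the preceding corollary, whereas the paper's $\langle\langle g_{1}^{k}\rangle\rangle$ for a generator $g_{1}$ of $G$ requires one to re-check the planarity and $\ell_{k}\to\infty$ hypotheses for an arbitrary word of $F_{n}$. The price is an extra appeal to growth tightness and a second, essential use of the hypothesis $\delta(G)=\delta(F_{n})$. The surrounding sandwich---monotonicity of $\delta$ for the upper bound and Proposition \ref{prop:lowerbounds}(1) for the lower bound---is identical in both arguments.
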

\begin{proof}
Put $\tilde{N}_{k}:=\langle\langle g_{1}^{k}\rangle\rangle\triangleleft F_{n}.$
It is easy to see that $\sup_{k}d(\id,g_{1}^{k})=\infty$. Hence,
by passing to a subsequence, we may assume that $\ell_{k}\rightarrow\infty$,
as $k\rightarrow\infty$. By Theorem \ref{thm:main} we have $\lim_{k\rightarrow\infty}\delta(\tilde{N}_{k})=\delta(F_{n})/2$.
Put $N_{k}:=\tilde{N}_{k}\cap G$. Since $N_{k}\triangleleft G$ we
have $\delta(G)/2\le\delta(N_{k})\le\delta(\tilde{N}_{k})$ by Proposition
\ref{prop:lowerbounds}. The corollary follows because $\lim_{k\rightarrow\infty}\delta(\tilde{N}_{k})=\delta(F_{n})/2=\delta(G)/2$. 
\end{proof}
To prove our main result, we make use of the concept of isoperimetric
inequalities to estimate the bottom of the spectrum of the discrete
Laplacian on graphs. This concept also allows us to give new proofs
for known results on the growth and cogrowth of quotient graphs of
the tree. In Section 2 we introduce the necessary preliminaries on
the discrete Laplacian and isoperimetric inequalities. The proof of
Theorem \ref{thm:main} is given in Section 3. Finally, in Section
4 we derive a relation between the growth and the cogrowth in Proposition
\ref{prop:growth cogrowth relation}, which motivates the following
conjecture.
\begin{conjecture*}
For every non-trivial $N\triangleleft F_{n}$ we have
\[
\delta(N)+\frac{1}{2}\log\left(\growth(T_{n}/N)\right)\ge\delta(F_{n}).
\]

\end{conjecture*}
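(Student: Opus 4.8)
The plan is to prove the inequality in its equivalent exponential form. Since $\delta(F_n)=\log(2n-1)$, the asserted bound $\delta(N)+\tfrac12\log(\growth(\Gamma_N))\ge\delta(F_n)$ is the same as
\[
\growth(\Gamma_N)\ \ge\ (2n-1)^{2}\,\e^{-2\delta(N)} .
\]
So one must produce a \emph{lower} bound for the volume growth of the quotient graph $\Gamma_N=T_n/N$ in terms of the number of elements of $N$. This is consistent with Proposition~\ref{prop:lowerbounds}: the bound $\delta(N)\ge\tfrac12\delta(F_n)$ is exactly what forces the right-hand side to be at most $2n-1=\growth(T_n)$, and it will also be the decisive ingredient below. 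The strategy is a Cauchy--Schwarz estimate comparing the fibres of the projection $\pi\colon T_n\to\Gamma_N$, $\pi(g)=Ng$, over a ball of $\Gamma_N$ with the spherical counting function of $N$.

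First I would set up the counting. Let $S_\ell:=\{g\in F_n\mid d(\id,g)=\ell\}$, so that $\card S_\ell=2n(2n-1)^{\ell-1}$; let $B_\ell$ denote the ball of radius $\ell$ about $\pi(\id)$ in $\Gamma_N$; and for a vertex $v$ put $f_\ell(v):=\card(\pi^{-1}(v)\cap S_\ell)$. As $\pi$ is $1$-Lipschitz we have $\pi(S_\ell)\subseteq B_\ell$, while $\sum_v f_\ell(v)=\card S_\ell$. Cauchy--Schwarz over the support of $f_\ell$ then gives
\[
(\card S_\ell)^{2}\ \le\ \card(\pi(S_\ell))\cdot\sum_v f_\ell(v)^{2}\ \le\ \card(B_\ell)\cdot\sum_v f_\ell(v)^{2},
\]
and $\sum_v f_\ell(v)^{2}$ counts ordered pairs $(w_1,w_2)\in S_\ell\times S_\ell$ with $\pi(w_1)=\pi(w_2)$, i.e.\ with $w_1w_2^{-1}\in N$.

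The heart of the argument is an exact parametrisation of these pairs. Setting $h:=w_1w_2^{-1}$ and letting $\sigma$ be the maximal common suffix of the reduced words $w_1,w_2$, one checks that $h$ is reduced of even length $|h|=2m$ with $m=\ell-|\sigma|$, and that $(w_1,w_2)\mapsto(h,\sigma)$ is a bijection onto the pairs where $h\in N$ has length $2m\le2\ell$ and $\sigma$ is any reduced word of length $\ell-m$ whose first letter avoids the (at most two) letters forbidden by reducedness of $w_1=u_1\sigma$, $w_2=u_2\sigma$, the halves $u_1,u_2$ being determined by $h$. Writing $a_j:=\card\{h\in N\mid d(\id,h)=j\}$ and $c_m$ for the number of admissible suffixes, which is at most $2n(2n-1)^{\ell-m}$, this yields
\[
\sum_v f_\ell(v)^{2}\ =\ \sum_{m=0}^{\ell} a_{2m}\,c_m\ \le\ 2n(2n-1)^{\ell}\sum_{m=0}^{\ell} a_{2m}\,(2n-1)^{-m}.
\]
By definition of $\delta(N)$, for every $\varepsilon>0$ there is $C_\varepsilon$ with $a_{2m}\le C_\varepsilon\,\e^{2m(\delta(N)+\varepsilon)}$ for all $m$; since $\delta(N)\ge\tfrac12\log(2n-1)$ the geometric ratio $\e^{2(\delta(N)+\varepsilon)}/(2n-1)$ is at least $1$, so the sum is dominated by its top term and
\[
\sum_v f_\ell(v)^{2}\ \le\ C_\varepsilon'\,(\ell+1)\,\e^{2\ell(\delta(N)+\varepsilon)} .
\]
Feeding this back into Cauchy--Schwarz gives $\card(B_\ell)\ge(\card S_\ell)^2/\sum_v f_\ell(v)^2\ge c_\varepsilon(\ell+1)^{-1}\bigl((2n-1)^{2}\e^{-2(\delta(N)+\varepsilon)}\bigr)^{\ell}$; taking $\ell$-th roots, letting $\ell\to\infty$ and then $\varepsilon\to0$ yields $\growth(\Gamma_N)\ge(2n-1)^{2}\e^{-2\delta(N)}$, as required.

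The main obstacle is to make the estimate $\sum_v f_\ell(v)^{2}\lesssim\e^{2\ell\delta(N)}$ sharp up to subexponential factors, and here two points are delicate. First, the combinatorial parametrisation must be verified carefully: one has to confirm that $\sigma$ admissible forces $w_1,w_2$ to be reduced of length exactly $\ell$ with maximal common suffix precisely $\sigma$ (which reduces to $h$ being reduced at its midpoint), so that $(w_1,w_2)\mapsto(h,\sigma)$ is a genuine bijection and no pair is lost or double counted. Second, and crucially, the geometric series collapses to its top term only because the admissible-suffix count decays like $(2n-1)^{-m}$ at exactly the rate $a_{2m}$ grows, and the hypothesis $\delta(N)\ge\tfrac12\delta(F_n)$ of Proposition~\ref{prop:lowerbounds} is what places the ratio in the regime where this happens; without it the bound would degrade. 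Converting the $\limsup$ definitions of $\delta(N)$ and $\growth(\Gamma_N)$ into the uniform-in-$\ell$ bounds used above is then routine.
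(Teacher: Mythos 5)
You should be aware that the paper does \emph{not} prove this statement: it is posed as an open conjecture, and the closest the paper comes is Proposition \ref{prop:growth cogrowth relation}, which has an extra additive $\log 2$ and is derived spectrally, by combining Proposition \ref{prop:EPS} with the Mohar--Woess bound $\growth(T_{n}/N)\ge(1-\lambda_{0})^{-2}$ and the crude estimate $\e^{\delta(N)}+\e^{\delta(F_{n})/2}\le2\e^{\delta(N)}$. Your argument is a genuinely different, purely combinatorial route, and after checking it step by step I find it correct. The load-bearing points all hold: (i) the map $(w_{1},w_{2})\mapsto(h,\sigma)$ is a genuine bijection, because reducedness of $h$ at its midpoint forces the last letters of the two halves $u_{1},u_{2}$ to be distinct, so the maximal common suffix of $(u_{1}\sigma,u_{2}\sigma)$ is exactly $\sigma$ and the two forbidden first letters of $\sigma$ are distinct; this gives $c_{m}=(2n-2)(2n-1)^{\ell-m-1}$ for $1\le m\le\ell-1$, $c_{0}=\card S_{\ell}$, $c_{\ell}=1$, all within your bound $2n(2n-1)^{\ell-m}$ (as a sanity check, for $N=F_{n}$ the identity $\sum_{m}a_{2m}c_{m}=(\card S_{\ell})^{2}$ comes out exactly); (ii) Roblin's bound $\delta(N)\ge\tfrac12\delta(F_{n})$, i.e.\ Proposition \ref{prop:lowerbounds}(1), is used precisely where you say, to put the geometric ratio $\e^{2(\delta(N)+\varepsilon)}/(2n-1)$ above $1$ so that the sum is at most $(\ell+1)$ times its top term --- this is the only place normality enters; (iii) the $\limsup$-to-uniform-bound conversions and the disappearance of the polynomial factors under the $\ell$-th root are routine. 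Comparing the two approaches: the paper's spectral route loses constants twice (in the Mohar--Woess inequality and in the factor $2$), and indeed your inequality $\growth(T_{n}/N)\ge(2n-1)^{2}\e^{-2\delta(N)}$ is strictly stronger than anything obtainable from $\growth\ge(1-\lambda_{0})^{-2}$ alone (equality of the two bounds occurs only at $\e^{\delta(N)}=2n-1$), whereas your second-moment count exploits the exact tree combinatorics and attains the sharp constant. In short, written up carefully, your proposal does not merely reprove a result of the paper --- it settles the paper's conjecture.
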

If this conjecture is true, then $\lim_{k \to \infty} \delta(N_k) = \delta(F_n)/2$ implies $\lim_{k \to \infty} {\rm growth}(T_n/N_k) = {\rm growth}(T_n)$.

\begin{acknowledgement*}
We would like to thank Wenyuan Yang for introducing the growth tightness
of (relatively) hyperbolic groups to us. 
\end{acknowledgement*}

\section{Preliminaries}

\subsection{Discrete Laplacian}

Let $n\in\N$ and let $\Gamma$ be a $(2n)$-regular graph with vertex
set $\mathcal{V}(\Gamma)$. The transition operator of the simple
random walk on $\Gamma$ is for $f:\mathcal{V}(\Gamma)\rightarrow\R$
given by 
\[
Pf(x):=\frac{1}{2n}\sum_{y\sim x}f(y),\quad x\in\mathcal{V}(\Gamma),
\]
where the sum is taken over all edges connecting $x$ and $y$. The
discrete Laplacian $\triangle$ on $\Gamma$ is given by $\triangle f:=f-Pf$.
Denote by $\lambda_{0}(\Gamma)$ the bottom of the spectrum of $\triangle$
given by 
\[
\lambda_{0}(\Gamma):=\inf\left\{ \lambda\in\R\mid\exists f\in\ell^{2}(\mathcal{V}(\Gamma))\mbox{ s.t. }\triangle(f)=\lambda f\right\} .
\]
The following two facts are well known.
\begin{fact}
The bottom of the spectrum is given by 
\[
\lambda_{0}(\Gamma)=\inf_{f:\mathcal{V}(\Gamma)\rightarrow\R,\card(\supp(f))<\infty}\frac{1}{2n}\frac{\sum_{x\sim y}\left|f(x)-f(y)\right|^{2}}{\sum_{x}\left|f(x)\right|^{2}}.
\]

\end{fact}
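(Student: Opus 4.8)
The plan is to recognise the right-hand side as the Rayleigh quotient of the self-adjoint operator $\triangle$ and then to invoke the variational characterisation of the bottom of its spectrum.

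First I would establish the discrete Green identity. For finitely supported $f\colon\mathcal V(\Gamma)\to\R$, writing $\langle\cdot,\cdot\rangle$ for the $\ell^2$ inner product, a summation by parts gives
\[
\langle\triangle f,f\rangle=\langle f,f\rangle-\langle Pf,f\rangle=\sum_x f(x)^2-\frac{1}{2n}\sum_x\sum_{y\sim x}f(x)f(y)=\frac{1}{2n}\sum_{x\sim y}|f(x)-f(y)|^2,
\]
where the last equality uses $(2n)$-regularity to rewrite $\sum_x\deg(x)f(x)^2=2n\sum_x f(x)^2$ and to regroup the mixed terms over the edges. Thus the quotient in the statement is exactly $\langle\triangle f,f\rangle/\langle f,f\rangle$. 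This identity simultaneously shows $\triangle\ge 0$; moreover, since $P$ averages over a symmetric neighbourhood relation of uniformly bounded valence, $P$ and hence $\triangle=I-P$ are bounded self-adjoint operators on $\ell^2(\mathcal V(\Gamma))$, with $0\le\triangle\le 2I$.

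Next I would apply the spectral theorem for bounded self-adjoint operators. Writing $\triangle=\int\lambda\,dE_\lambda$, for every unit vector $f$ the number $\langle\triangle f,f\rangle=\int\lambda\,d\mu_f(\lambda)$ is the average of $\lambda$ against the probability measure $\mu_f=\langle E_\lambda f,f\rangle$ carried by $\sigma(\triangle)$, hence at least $\inf\sigma(\triangle)$; conversely, taking $f$ in the range of the spectral projection of $\triangle$ onto $[\inf\sigma(\triangle),\inf\sigma(\triangle)+\varepsilon]$ makes the quotient at most $\inf\sigma(\triangle)+\varepsilon$. This proves the Rayleigh--Ritz identity $\inf\sigma(\triangle)=\inf_{f\in\ell^2,\,f\neq 0}\langle\triangle f,f\rangle/\langle f,f\rangle$, which equals $\lambda_0(\Gamma)$ once the latter is read as $\inf\sigma(\triangle)$. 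Finally I would restrict the infimum to finitely supported $f$: since such functions are dense in $\ell^2$ and $f\mapsto\langle\triangle f,f\rangle$ is continuous because $\triangle$ is bounded, the infimum of the Rayleigh quotient is unchanged, and combining this with the Green identity yields the stated formula.

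The step that needs the most care is the passage between the definition of $\lambda_0(\Gamma)$ as the infimum of the eigenvalue set $\{\lambda\mid\triangle f=\lambda f\text{ for some }f\in\ell^2\}$ and the spectral-theoretic bottom $\inf\sigma(\triangle)$ used above: on an infinite graph the bottom of the spectrum need not be an honest eigenvalue, so $\lambda_0(\Gamma)$ must be read as $\inf\sigma(\triangle)$ and the lower bound realised through approximate eigenfunctions supplied by the spectral projections rather than exact eigenvectors. The remaining points---the correct counting of loops and multiple edges in the regularity normalisation, and the density argument---are routine.
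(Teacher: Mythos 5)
Your proof is correct. The paper itself offers no argument here --- this statement is recorded as a ``well known'' Fact without proof --- so there is nothing internal to compare against; your variational argument (Green's identity turning the quotient into the Rayleigh quotient of $\triangle$, the Rayleigh--Ritz characterisation via the spectral theorem, and density of finitely supported functions together with boundedness of $\triangle$) is the standard route and is sound. You also put your finger on the one point that genuinely needs care: taken literally, the paper's definition of $\lambda_{0}(\Gamma)$ as an infimum over $\ell^{2}$-eigenvalues is vacuous for graphs such as $T_{n}$, where the bottom of the spectrum is not attained by an $\ell^{2}$-eigenfunction, so it must be read as $\inf\sigma(\triangle)$ and approached through approximate eigenfunctions from the spectral projections, exactly as you do. The only alternative route suggested by the paper's own setup would be to go through its second Fact (the characterisation of $\lambda\le\lambda_{0}$ by positive $\lambda$-harmonic functions), but that is neither necessary nor simpler than your spectral-theorem argument; your handling of loops and multiple edges in the edge sum is likewise consistent with the $(2n)$-regular normalisation used for $\Gamma_{N}=T_{n}/N$.
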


\begin{fact}
Let $\lambda\in\R$. Then we have $\lambda\le\lambda_{0}(\Gamma)$
if and only if there exists $f:\mathcal{V}(\Gamma)\rightarrow\R_{>0}$
such that $\triangle f=\lambda f$. 
\end{fact}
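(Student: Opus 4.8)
The plan is to establish the two implications by different methods: the ``if'' part (a positive $\lambda$-eigenfunction forces $\lambda\le\lambda_0$) is a one-line consequence of a ground-state substitution, while the ``only if'' part (the spectral bound produces a positive eigenfunction) requires an exhaustion argument and is where essentially all the work sits.

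For the ``if'' direction, suppose $f\colon\mathcal V(\Gamma)\to\R_{>0}$ satisfies $\triangle f=\lambda f$, equivalently $\frac1{2n}\sum_{y\sim x}f(y)=(1-\lambda)f(x)$ for every $x$. Given a finitely supported $\phi$ I would set $\psi:=\phi/f$, which is legitimate and again finitely supported since $f>0$, and substitute $\phi=f\psi$ into the quadratic form $\langle\triangle\phi,\phi\rangle=\sum_x\phi(x)^2-\frac1{2n}\sum_{x\sim y}\phi(x)\phi(y)$ supplied by the first Fact. Using the eigenvalue identity to replace $\lambda f(x)$ by $f(x)-\frac1{2n}\sum_{y\sim x}f(y)$ and then symmetrising over adjacent pairs, the quantity $\langle\triangle\phi,\phi\rangle-\lambda\sum_x\phi(x)^2$ collapses into a nonnegative multiple of $\sum_{x\sim y}f(x)f(y)\,(\psi(x)-\psi(y))^2\ge0$. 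Hence $\langle\triangle\phi,\phi\rangle\ge\lambda\sum_x\phi(x)^2$ for every finitely supported $\phi$, and the Rayleigh-quotient formula of the first Fact gives $\lambda\le\lambda_0(\Gamma)$.

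For the ``only if'' direction, assume $\lambda\le\lambda_0(\Gamma)$. I first note that $\lambda_0(\Gamma)<1$ for any connected graph with an edge (test the Rayleigh quotient on $\delta_x+\delta_y$ with $x\sim y$), so $1-\lambda>0$ throughout. I would then exhaust $\mathcal V(\Gamma)$ by an increasing sequence of finite connected sets $V_m$ with $\bigcup_m V_m=\mathcal V(\Gamma)$ and fix a base vertex $o\in V_1$. On each $V_m$ the bottom Dirichlet eigenvalue $\lambda_0(V_m)$ satisfies $\lambda_0(V_m)\ge\lambda_0(\Gamma)$ and decreases to $\lambda_0(\Gamma)$. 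When $\lambda<\lambda_0(\Gamma)$ we have $\lambda<\lambda_0(V_m)$, so the finite operator $(1-\lambda)I-P_m$ (the Dirichlet realisation of $\triangle-\lambda$ on $V_m$) is positive definite and its inverse, given by the convergent Neumann series in $P_m$, has strictly positive entries by irreducibility; solving $\triangle u_m=\lambda u_m$ on $V_m$ with boundary data $u_m\equiv1$ on the outer boundary $\partial V_m$ then yields a strictly positive solution, since the boundary data enters as a nonnegative, nonzero inhomogeneity. In the borderline case $\lambda=\lambda_0(\Gamma)$ I would instead take $u_m$ to be the Perron--Frobenius eigenfunction of the finite Dirichlet problem, which is positive and solves $\triangle u_m=\lambda_0(V_m)u_m$ with $\lambda_0(V_m)\to\lambda$. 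In either case I normalise $u_m(o)=1$.

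It remains to pass to the limit, and this is the main obstacle: I must ensure the exhaustion limit is a genuine, source-free positive solution rather than a Green's-function-type object retaining a pole. The mechanism is a uniform Harnack inequality. From $\triangle u=\lambda' u$ with $u>0$ and $1-\lambda'>0$ one reads off $u(x)\ge\frac{1}{(1-\lambda')2n}u(y)$ for adjacent $x,y$, hence the scale-invariant comparison $u(x)/u(y)\le(1-\lambda')2n$ in both directions; chaining this along a geodesic from $o$ bounds $u_m(x)$ above and below by positive quantities depending only on $d(x,o)$ (and on the fixed bounded range of the $\lambda'$), uniformly in $m$, once $x$ lies in the interior region of $V_m$ where the equation holds with honest neighbour values. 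Consequently each sequence $(u_m(x))_m$ is bounded and bounded away from $0$, so a diagonal extraction produces a pointwise subsequential limit $f$ with $f(o)=1$. Because every fixed vertex is eventually strictly interior and $\lambda'\to\lambda$, the limit satisfies $\triangle f=\lambda f$ at every vertex with no residual source, and the Harnack bounds force $f>0$ everywhere. This $f$ is the required positive $\lambda$-eigenfunction.
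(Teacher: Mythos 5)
Your proof is correct, but note that the paper offers no proof of this Fact at all --- it is quoted as ``well known,'' so there is nothing internal to compare against. What you have written is the standard discrete Allegretto--Piepenbrink/Dodziuk-type argument (the graph analogue of Sullivan's characterization of $\lambda_0$ for manifolds), and both halves are sound: the ground-state substitution $\phi=f\psi$ does yield the identity $\langle\triangle\phi,\phi\rangle-\lambda\|\phi\|^{2}=\frac{1}{2n}\sum_{x\sim y}f(x)f(y)(\psi(x)-\psi(y))^{2}\ge 0$ once one uses $\sum_{y\sim x}(f(x)-f(y))=2n\lambda f(x)$, and your exhaustion construction correctly avoids the Green's-function trap by feeding the positivity in through boundary data (or the Perron--Frobenius eigenfunction in the borderline case) rather than through a point source, with the chained Harnack inequality supplying the locally uniform two-sided bounds needed for the diagonal limit. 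Two small points worth making explicit if you write this up: (i) the ``only if'' direction is false for finite graphs (there $\lambda_{0}=0$, yet no positive eigenfunction exists for $\lambda<0$, since summing $\triangle f=\lambda f$ over all vertices forces $\lambda=0$), so the statement and your argument implicitly assume $\Gamma$ is infinite and connected --- your exhaustion uses this when it asserts that the boundary inhomogeneity is nonzero; (ii) your appeal to $\lambda_{0}(V_{m})\downarrow\lambda_{0}(\Gamma)$ rests precisely on the Rayleigh-quotient formula of the paper's first Fact, which is likewise stated without proof, so you are not proving either Fact from scratch but deriving the second from the first --- which is a perfectly reasonable division of labour.
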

In order to explain the relation between the bottom of the spectrum
and the Poincar\'{e} exponent, we will first state Girgorchuk's cogrowth
formula. Denote by $\rho(F_{n}/N)$ the spectral radius of the transition
operator $P:\ell^{2}(\mathcal{V}(\Gamma_{N}))\rightarrow\ell^{2}(\mathcal{V}(\Gamma_{N}))$
of the simple random walk on the quotient graph $\Gamma_{N}:=T_{n}/N$. 
\begin{thm}
[Grigorchuk's cogrowth formula] For every $\left\{ 1\right\} \neq N\triangleleft F_{n}$
we have 
\begin{equation}
\rho(F_{n}/N)=\frac{\sqrt{2n-1}}{2n}\left(\frac{\sqrt{2n-1}}{\e^{\delta(N)}}+\frac{\e^{\delta(N)}}{\sqrt{2n-1}}\right).\label{eq:cogrowth-EPS}
\end{equation}
    \end{thm}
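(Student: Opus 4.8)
The plan is to compute $\rho(F_n/N)$ through the return probabilities of the walk and reduce everything to a counting problem on the tree $T_n$. Since $\Gamma_N$ is connected and $P$ is self-adjoint on $\ell^2(\mathcal V(\Gamma_N))$, the spectral radius is the exponential growth rate of the return probabilities, $\rho(F_n/N)=\limsup_{m\to\infty}(P^m)_{oo}^{1/m}$, where $o:=N\in\mathcal V(\Gamma_N)$ is the base vertex. A closed walk of length $m$ at $o$ is a word $s_1\cdots s_m$ in $S\cup S^{-1}$ with $s_1\cdots s_m\in N$, and each of the $(2n)^m$ words carries equal weight, so $(P^m)_{oo}=c_m/(2n)^m$ with $c_m:=\card\{\,w\in(S\cup S^{-1})^m : w\in N\,\}$. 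Hence $\rho(F_n/N)=\gamma/(2n)$ with $\gamma:=\limsup_m c_m^{1/m}$, and the theorem becomes the assertion that $\gamma$ is determined by $\delta(N)$.

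Next I would separate reduction from padding. Every word $w$ of length $m$ reduces to a unique element of $F_n$, and $w\in N$ iff its reduced form lies in $N$. Writing $a_\ell:=\card\{g\in N:d(\id,g)=\ell\}$, so that $\limsup_\ell a_\ell^{1/\ell}=\e^{\delta(N)}=:\alpha$, the key observation is that the number of words of length $m$ reducing to a fixed $g$ with $d(\id,g)=\ell$ equals the number $W(m,\ell)$ of walks of length $m$ between two vertices at distance $\ell$ in the $(2n)$-regular tree; by vertex- and distance-homogeneity of $T_n$ this count depends only on $m$ and $\ell$. Therefore $c_m=\sum_\ell a_\ell\,W(m,\ell)$, and passing to generating functions gives
\[
C(z):=\sum_m c_m z^m=\sum_\ell a_\ell\,f_\ell(z),\qquad f_\ell(z):=\sum_m W(m,\ell)z^m .
\]
The tree-walk functions are classical: with $F(z)$ the first-passage generating function to an adjacent vertex, one has $f_\ell(z)=G(z)F(z)^\ell$, where $F$ solves $(2n-1)zF^2-F+z=0$ and $G(z)=(1-2nzF(z))^{-1}$. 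Substituting collapses the counting into the single functional identity
\[
C(z)=G(z)\,A\bigl(F(z)\bigr),\qquad A(\zeta):=\sum_\ell a_\ell\zeta^\ell .
\]

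The final step is an analysis of radii of convergence, and it is here that I expect the main obstacle. The cogrowth series $A$ has nonnegative coefficients and radius $1/\alpha$, so by Pringsheim its radius is a genuine singularity, while $F$ is analytic and increasing on $[0,z_c]$ with $z_c=1/(2\sqrt{2n-1})$ and $F(z_c)=1/\sqrt{2n-1}$. By Proposition \ref{prop:lowerbounds}(1) we have $\alpha=\e^{\delta(N)}\ge\sqrt{2n-1}$, i.e. $1/\alpha\le F(z_c)$, so there is a unique $z^\ast\le z_c$ with $F(z^\ast)=1/\alpha$, where the singularity of $A(F(z))$ first appears; thus $z^\ast$ is the radius of convergence of $C$, whence $\gamma=1/z^\ast$ and $\rho(F_n/N)=1/(2n\,z^\ast)$. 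Solving the defining quadratic for $F$ at the value $1/\alpha$ gives $z^\ast=\alpha/(\alpha^2+2n-1)$, and substituting $\alpha=\e^{\delta(N)}$ yields exactly (\ref{eq:cogrowth-EPS}). The delicate point is to show rigorously that the singularity of $C$ sits at $z^\ast$ and is neither masked by a cancellation with $G$ nor preempted by the branch point $z_c$; the regime inequality $\alpha\ge\sqrt{2n-1}$ is precisely what guarantees $z^\ast\le z_c$, so that $F(z^\ast)=1/\alpha$ is attained before the tree's own branch point, and it is exactly this case distinction that separates the stated formula from the degenerate value $\rho=\sqrt{2n-1}/n$.
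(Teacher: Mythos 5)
The paper does not actually prove this theorem: it is quoted from Grigorchuk's work, and the accompanying remark merely notes that the formula for general subgroups appears in the Grigorchuk--de la Harpe paper and that a proof can be given via Patterson--Sullivan theory. Your argument is therefore necessarily a different route; it is the classical combinatorial proof by generating functions, and as an outline it is sound. The identification $\rho(F_n/N)=\limsup_m(P^m)_{oo}^{1/m}$ is legitimate because $\Gamma_N$ is connected and $P$ is self-adjoint; the decomposition $c_m=\sum_\ell a_\ell W(m,\ell)$ is correct since a word lies in $N$ iff its reduced form does and the number of length-$m$ words reducing to a fixed $g$ depends only on $d(\id,g)$; the tree identities $f_\ell=GF^\ell$, $(2n-1)zF^2-F+z=0$, $G=(1-2nzF)^{-1}$ are standard; and solving $F(z^\ast)=\e^{-\delta(N)}$ indeed gives $z^\ast=\alpha/(\alpha^2+2n-1)$ and hence \eqref{eq:cogrowth-EPS}. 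The step you single out as delicate is the only one requiring real care, and your resolution is the right one: for $z^\ast<z_c$ the function $F$ has positive derivative at $z^\ast$, hence is locally biholomorphic there, and $G$ is analytic and nonvanishing near $z^\ast$ (note $2nz_cF(z_c)=n/(2n-1)<1$), so Pringsheim applied to $A$ forces a singularity of $C=G\cdot(A\circ F)$ at $z^\ast$, while Pringsheim applied to $C$ itself rules out a larger radius; in the boundary case $\alpha=\sqrt{2n-1}$ (which Proposition \ref{prop:lowerbounds}(2) in fact excludes for normal $N$, since $F_n$ is of divergence type) the trivial bound $c_m\ge W(m,0)$ caps the radius of $C$ at $z_c$ directly. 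Your appeal to Proposition \ref{prop:lowerbounds}(1) to ensure $1/\alpha\le F(z_c)$ is not circular within this paper, since that proposition is imported from Roblin and proved by conformal-density methods independent of the cogrowth formula. In comparison, the Patterson--Sullivan route alluded to in the paper treats arbitrary subgroups uniformly and ties the formula conceptually to $\lambda_0$; your route is more elementary and makes explicit exactly where the dichotomy between the stated formula and the degenerate value $\rho=\sqrt{2n-1}/n$ enters.
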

\begin{rem*}
In \cite{MR1436550} the cogrowth formula is stated for arbitrary
subgroups $H<F_{n}$. A proof of this formula can be given by using
the Patterson-Sullivan theory.
\end{rem*}
The relation between the bottom of the spectrum of the Laplacian and
the Poincar\'{e} exponent is stated in the following proposition.
The analogue result for Kleinian groups is known as the Theorem of
Elstrodt, Patterson and Sullivan (\cite{MR882827}). 
\begin{prop}
\label{prop:EPS}For every $\{1\}\neq N\triangleleft F_{n}$ we have
\[
\lambda_{0}(\Gamma_{N})=\frac{1}{2n}(2n-1-\e^{\delta(N)})(1-\e^{-\delta(N)}).
\]
 \end{prop}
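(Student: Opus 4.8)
The plan is to reduce the statement to Grigorchuk's cogrowth formula (\ref{eq:cogrowth-EPS}) through the spectral theory of the self-adjoint operator $P$. Since $\Gamma_{N}$ is $(2n)$-regular, the transition operator $P=\frac{1}{2n}A$, with $A$ the adjacency operator, is bounded and self-adjoint on $\ell^{2}(\mathcal V(\Gamma_N))$, and $\triangle=I-P$. The first Fact identifies $\lambda_{0}(\Gamma_{N})$ with the infimum of the Rayleigh quotient of $\triangle$, that is, with $\inf\operatorname{spec}(\triangle)$. By the spectral mapping theorem $\operatorname{spec}(\triangle)=1-\operatorname{spec}(P)$, so
\[
\lambda_{0}(\Gamma_{N})=\inf\operatorname{spec}(\triangle)=1-\sup\operatorname{spec}(P).
\]

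The key step is to show $\sup\operatorname{spec}(P)=\rho(F_{n}/N)$. Since $P$ is self-adjoint, its spectral radius equals its operator norm $\|P\|$ and $\operatorname{spec}(P)\subset[-\|P\|,\|P\|]$; what needs checking is that the spectrum reaches $\|P\|$ at its \emph{upper} end rather than only at its lower end. This is where I would use the positivity of $P$: because the entries $P(x,y)$ are non-negative, passing from any $f\in\ell^{2}$ to its pointwise modulus $|f|$ gives $\langle P|f|,|f|\rangle\ge|\langle Pf,f\rangle|$ while preserving the norm. Choosing $f_{k}$ with $\|f_{k}\|=1$ and $|\langle Pf_{k},f_{k}\rangle|\to\|P\|$ therefore produces non-negative test functions $|f_{k}|$ with $\langle P|f_{k}|,|f_{k}|\rangle\to\|P\|$, and since $\langle Pg,g\rangle\le\sup\operatorname{spec}(P)\,\|g\|^{2}$ for every $g$, we conclude $\sup\operatorname{spec}(P)=\|P\|=\rho(F_{n}/N)$, whence $\lambda_{0}(\Gamma_{N})=1-\rho(F_{n}/N)$. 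I expect this positivity argument to be the only genuine obstacle, as the remaining manipulations are purely formal.

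It then remains to substitute (\ref{eq:cogrowth-EPS}) and simplify. Writing $q=2n-1$ and $d=\e^{\delta(N)}$, the right-hand side of (\ref{eq:cogrowth-EPS}) collapses to $\rho(F_{n}/N)=\frac{1}{2n}\bigl(q\,d^{-1}+d\bigr)$, so that
\[
\lambda_{0}(\Gamma_{N})=1-\frac{1}{2n}\bigl((2n-1)\e^{-\delta(N)}+\e^{\delta(N)}\bigr).
\]
On the other hand, expanding the claimed expression gives $(2n-1-\e^{\delta(N)})(1-\e^{-\delta(N)})=2n-(2n-1)\e^{-\delta(N)}-\e^{\delta(N)}$, and multiplying by $\frac{1}{2n}$ reproduces the displayed value of $\lambda_{0}(\Gamma_{N})$ term by term. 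This proves the proposition.
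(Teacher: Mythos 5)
Your proof is correct and follows essentially the same route as the paper: reduce to the identity $\lambda_{0}(\Gamma_{N})=1-\rho(F_{n}/N)$ and then substitute Grigorchuk's cogrowth formula (\ref{eq:cogrowth-EPS}). The only difference is that where the paper cites Mohar for the fact that $\rho(F_{n}/N)$ lies at the top of the spectrum of $P$, you supply the standard positivity argument for it yourself, which is a valid (and self-contained) substitute.
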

\begin{proof}
First observe that $\lambda_{0}(\Gamma_{N})=1-\rho(F_{n}/N)$, where
we used the fact that $\rho(F_{n}/N)$ is contained in the spectrum
of the transition operator of the simple random walk on $F_{n}/N$
(\cite{MR943998}, see also \cite[Theorem 4.4]{MR986363}). The proposition
now follows from (\ref{eq:cogrowth-EPS}). More precisely, we have
that  
\begin{align*}
\lambda_{0}(\Gamma_{N}) & =1-\frac{\sqrt{2n-1}}{2n}\left(\frac{\sqrt{2n-1}}{\e^{\delta(N)}}+\frac{\e^{\delta(N)}}{\sqrt{2n-1}}\right)=1-\left(\frac{2n-1}{2n}\e^{-\delta(N)}+\frac{1}{2n}\e^{\delta(N)}\right)\\
 & =\frac{1}{2n}\left(2n-(2n-1)\e^{-\delta(N)}-\e^{\delta(N)}\right)=\frac{1}{2n}\left(2n-1-\e^{\delta(N)}\right)\left(1-\e^{-\delta(N)}\right).
\end{align*}

\end{proof}

\subsection{Isoperimetric constant}

The isoperimetric constant of a $(2n)$-regular graph $\Gamma$ is
given by 
\[
i(\Gamma):=\inf_{A\subset\mathcal{V}(\Gamma),\card(A)<\infty}\frac{1}{2n}\frac{\card(\partial A)}{\card(A)},
\]
where $\partial A$ denotes the set of edges $e$ such that $e$ connects
$x,y$ with $x\in A$ and $y\in\mathcal{V}(\Gamma)\setminus A$. It
is well known that 
\[
i(T_{n})=(n-1)/n\quad\mbox{and}\quad\lambda_{0}(T_{n})=1-(\sqrt{2n-1})/n.
\]

The following analogue of the well-known Cheeger inequality was proved
by Mohar.
\begin{prop}
[\cite{MR943998}, Theorem 2.1] \label{prop:cheeger inequality}We
have  
\[
i(\Gamma)\le\sqrt{1-(1-\lambda_{0}(\Gamma))^{2}}.
\]

\end{prop}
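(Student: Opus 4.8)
This is the discrete Cheeger inequality for the $(2n)$-regular graph $\Gamma$, and I would prove it by the co-area method, realizing a near-optimal set for $i(\Gamma)$ as a superlevel set of a near-optimal test function for $\lambda_{0}(\Gamma)$. The two ingredients are already in place: the first Fact of this section expresses $\lambda_{0}(\Gamma)=\inf_{f}\frac{1}{2n}\frac{\sum_{x\sim y}|f(x)-f(y)|^{2}}{\sum_{x}|f(x)|^{2}}$ over finitely supported $f$, and the definition of $i(\Gamma)$ says exactly that $\card(\partial A)\ge 2n\, i(\Gamma)\,\card(A)$ for every finite $A\subset\mathcal{V}(\Gamma)$. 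Fixing $\varepsilon>0$, I would pick a finitely supported $f$ whose Rayleigh quotient is below $\lambda_{0}(\Gamma)+\varepsilon$ and consider the finite superlevel sets $A_{t}:=\{x:f(x)^{2}>t\}$ for $t\ge 0$.

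The heart of the argument is the co-area formula applied to $f^{2}$. Integrating the pointwise indicator identities over the threshold yields $\int_{0}^{\infty}\card(A_{t})\,dt=\sum_{x}f(x)^{2}$ and $\int_{0}^{\infty}\card(\partial A_{t})\,dt=\sum_{x\sim y}|f(x)^{2}-f(y)^{2}|$, since an edge $\{x,y\}$ lies in $\partial A_{t}$ precisely for $t$ between $f(x)^{2}$ and $f(y)^{2}$. Inserting the isoperimetric bound $\card(\partial A_{t})\ge 2n\, i(\Gamma)\,\card(A_{t})$ under the integral sign gives $\sum_{x\sim y}|f(x)^{2}-f(y)^{2}|\ge 2n\, i(\Gamma)\sum_{x}f(x)^{2}$. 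I would then factor $|f(x)^{2}-f(y)^{2}|=|f(x)-f(y)|\,|f(x)+f(y)|$ and apply Cauchy--Schwarz over the edge set to bound the left-hand side by $\bigl(\sum_{x\sim y}|f(x)-f(y)|^{2}\bigr)^{1/2}\bigl(\sum_{x\sim y}|f(x)+f(y)|^{2}\bigr)^{1/2}$.

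The decisive step is to evaluate both factors exactly as quadratic forms. Using $(2n)$-regularity, $\sum_{x\sim y}|f(x)-f(y)|^{2}=2n\langle f,\triangle f\rangle$ and $\sum_{x\sim y}|f(x)+f(y)|^{2}=2n(\langle f,f\rangle+\langle f,Pf\rangle)$. Writing $\beta:=\langle f,Pf\rangle/\langle f,f\rangle$, these become $2n(1-\beta)\langle f,f\rangle$ and $2n(1+\beta)\langle f,f\rangle$, so after cancelling the common factor $2n\langle f,f\rangle$ the whole chain collapses to $i(\Gamma)\le\sqrt{(1-\beta)(1+\beta)}=\sqrt{1-\beta^{2}}$. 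Now the first Fact forces $1-\beta\ge\lambda_{0}(\Gamma)$ for every $f$, while the near-optimality of our $f$ gives $1-\beta<\lambda_{0}(\Gamma)+\varepsilon$; hence $\beta\in[\,1-\lambda_{0}(\Gamma)-\varepsilon,\,1-\lambda_{0}(\Gamma)\,]$ and $i(\Gamma)\le\sqrt{1-(1-\lambda_{0}(\Gamma)-\varepsilon)^{2}}$, and letting $\varepsilon\to0$ yields the claim. I expect the one genuinely delicate point to be the sharp constant: the crude estimate $|f(x)+f(y)|^{2}\le 2(f(x)^{2}+f(y)^{2})$ only delivers $i(\Gamma)\le\sqrt{2\lambda_{0}(\Gamma)}$, and recovering the exact factor $\sqrt{1-(1-\lambda_{0}(\Gamma))^{2}}$ forces me both to run the co-area argument on $f^{2}$ rather than $f$ and to keep $\sum_{x\sim y}|f(x)+f(y)|^{2}$ as the exact form $2n(1+\beta)\langle f,f\rangle$ tied to the same $\beta$ that controls the Rayleigh quotient.
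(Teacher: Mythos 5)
Your argument is correct. Note that the paper does not prove this proposition at all: it is quoted verbatim from Mohar \cite{MR943998}, Theorem 2.1, so there is no in-paper proof to compare against. What you have written is essentially Mohar's own argument (the standard discrete Cheeger-type proof): co-area formula applied to $f^{2}$, the pointwise isoperimetric bound under the integral, Cauchy--Schwarz with the factorization $|f(x)^{2}-f(y)^{2}|=|f(x)-f(y)|\,|f(x)+f(y)|$, and the exact identities $\sum_{x\sim y}|f(x)\mp f(y)|^{2}=2n(1\mp\beta)\langle f,f\rangle$, which is indeed where the sharp constant comes from. The only point I would tighten is the final monotonicity step: concluding $\sqrt{1-\beta^{2}}\le\sqrt{1-(1-\lambda_{0}-\varepsilon)^{2}}$ from $\beta>1-\lambda_{0}-\varepsilon$ requires $\beta\ge 0$, i.e.\ $1-\lambda_{0}-\varepsilon\ge 0$; this is harmless since $\lambda_{0}(\Gamma)\le 1$ for any $(2n)$-regular graph (test $f=\delta_{x}$ in the Rayleigh quotient), but it deserves a sentence. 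With that remark added, your proof is a complete and self-contained justification of the cited inequality.
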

The following relation between $\lambda_{0}$ and the growth is due
to Mohar (\cite{MR943998}).
\begin{lem}
[\cite{MR943998}, Theorem 4.1] \label{lem:growth-lowerbound}We
have 
\[
\growth(\Gamma)\ge\frac{1+i(\Gamma)}{1-i(\Gamma)}.
\]
That is, we have 
\[
i(\Gamma)\le\frac{\growth(\Gamma)-1}{\growth(\Gamma)+1}.
\]

\end{lem}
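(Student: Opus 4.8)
The plan is to convert the isoperimetric inequality into a multiplicative recursion on the sizes of metric balls and then read off the growth. Fix a base vertex $\gamma_{0}$ and for $m\ge0$ write $B_{m}:=\{\gamma\mid d(\gamma,\gamma_{0})\le m\}$, $b_{m}:=\card(B_{m})$, and let $s_{m}:=b_{m}-b_{m-1}$ be the number of vertices at distance exactly $m$. Since the distances of two adjacent vertices from $\gamma_0$ differ by at most $1$, every edge in $\partial B_{m}$ joins a vertex at distance $m$ to a vertex at distance $m+1$; I record its cardinality as $e_{m}:=\card(\partial B_{m})$. By the definition of $i(\Gamma)$ as an infimum we have the pointwise bound $e_{m}\ge 2n\,i(\Gamma)\,b_{m}$ for every $m$, so the whole problem reduces to turning these inequalities into a geometric lower bound for $b_{m}$.

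The tempting first move is to bound $e_{m}\le 2n\,s_{m+1}$ by assigning each boundary edge to its endpoint at distance $m+1$; combined with $s_{m+1}=b_{m+1}-b_{m}$ this only yields $b_{m+1}\ge(1+i(\Gamma))\,b_{m}$ and hence $\growth(\Gamma)\ge 1+i(\Gamma)$, which is strictly weaker than the claim. The key improvement, and the one genuinely delicate point, is to account for the \emph{full} degree of the vertices at distance exactly $m+1$. Each such vertex has $2n$ incident edges, and each of its neighbours lies at distance $m$, $m+1$ or $m+2$; hence summing the degrees over these $s_{m+1}$ vertices gives $2n\,s_{m+1}=e_{m}+2g+e_{m+1}$, where $g$ is the number of edges internal to the sphere of radius $m+1$. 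In particular $e_{m}+e_{m+1}\le 2n\,s_{m+1}=2n\,(b_{m+1}-b_{m})$, so the same intermediate sphere feeds the boundaries of both $B_{m}$ and $B_{m+1}$.

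Applying the isoperimetric bound at the two consecutive radii, $e_{m}\ge 2n\,i(\Gamma)\,b_{m}$ and $e_{m+1}\ge 2n\,i(\Gamma)\,b_{m+1}$, and substituting into $e_{m}+e_{m+1}\le 2n\,(b_{m+1}-b_{m})$, I would cancel $2n$ and rearrange to obtain $(1+i(\Gamma))\,b_{m}\le(1-i(\Gamma))\,b_{m+1}$. Since any $(2n)$-regular graph containing an edge satisfies $i(\Gamma)<1$ (take $A$ to consist of two adjacent vertices), this reads $b_{m+1}\ge\frac{1+i(\Gamma)}{1-i(\Gamma)}\,b_{m}$, whence $b_{m}\ge\bigl(\frac{1+i(\Gamma)}{1-i(\Gamma)}\bigr)^{m}$. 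Taking $m$-th roots and the limit superior gives $\growth(\Gamma)\ge\frac{1+i(\Gamma)}{1-i(\Gamma)}$, and solving this inequality for $i(\Gamma)$ yields the equivalent form $i(\Gamma)\le\frac{\growth(\Gamma)-1}{\growth(\Gamma)+1}$. The main obstacle is thus not computational but conceptual: recognizing that one must invoke the isoperimetric inequality at two radii simultaneously and balance them through the degree count on the separating sphere, rather than estimating a single ball.
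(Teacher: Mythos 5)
Your proof is correct: the degree count over the sphere of radius $m+1$ gives $e_{m}+e_{m+1}\le 2n\,(b_{m+1}-b_{m})$, and combining this with the isoperimetric bound at the two consecutive radii does yield $b_{m+1}\ge\frac{1+i(\Gamma)}{1-i(\Gamma)}\,b_{m}$ and hence the claimed lower bound on $\growth(\Gamma)$. Note that the paper does not prove this lemma at all --- it is quoted from Mohar \cite[Theorem 4.1]{MR943998} --- and your argument is essentially the standard one from that source, including the key point you identify of invoking the isoperimetric inequality at two radii balanced through the separating sphere rather than settling for the weaker bound $1+i(\Gamma)$.
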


\section{Proof of the Main Result}

In order to obtain estimates on the isoperimetric constant we show
that, for a subgroup $H<F_{n}$, it suffices to consider all the finite
core subgraphs of $\Gamma_{H}$ in the definition of $i(\Gamma_{H})$. 
\begin{defn}
Let $\Gamma\subset\Gamma_{H}$ be a finite subgraph. The minimal subgraph
$C\subset\Gamma$ such that the inclusion $\iota:C\rightarrow\Gamma$
is a homotopy equivalence is called the core of $\Gamma$. \end{defn}
\begin{lem}
\label{lem:isoperimetric-reduction}Suppose that $H\neq\{1\}$. Then
we have\emph{
\[
i(\Gamma_{H})=\inf\frac{1}{2n}\frac{\card(\partial C_{\Gamma})}{\card(C_{\Gamma})},
\]
}where the infimum is taken over all finite connected subgraphs $\Gamma\subset\Gamma_{H}$
and $C_{\Gamma}$ denotes the core of $\Gamma$.\end{lem}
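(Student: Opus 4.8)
The plan is to prove the two inequalities separately. One direction is immediate: for any finite connected subgraph $\Gamma\subset\Gamma_{H}$ the core $C_{\Gamma}$ is again a finite subgraph, so its vertex set is one of the finite sets $A$ competing in the definition of $i(\Gamma_{H})$; hence the right-hand infimum is taken over a subcollection of finite vertex sets and therefore dominates $i(\Gamma_{H})$. The substance is the reverse inequality: for every finite $A\subset\mathcal V(\Gamma_{H})$ I must produce a finite connected subgraph $\Gamma$ whose core satisfies $\frac{1}{2n}\frac{\card(\partial C_{\Gamma})}{\card(C_{\Gamma})}\le\frac{1}{2n}\frac{\card(\partial A)}{\card(A)}$. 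The main computational device is the identity, valid in any $(2n)$-regular graph,
\[
\frac{\card(\partial A)}{\card(A)}=2n-2\,\frac{e(A)}{\card(A)},
\]
where $e(A)$ denotes the number of edges of $\Gamma_{H}$ with both endpoints in $A$ (a loop counted once); it follows by summing the degree $2n$ over all vertices of $A$ and separating internal from boundary edges, i.e.\ $2n\,\card(A)=2e(A)+\card(\partial A)$. Thus minimizing the boundary ratio is the same as maximizing the edge density $e(A)/\card(A)$, and this is the viewpoint I will use throughout.

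First I would reduce to the case that the induced subgraph $\Gamma_{H}[A]$ is connected. If $A_{1},\dots,A_{r}$ are its connected components, then no edge of $\Gamma_{H}$ joins distinct $A_{i}$ (the induced subgraph contains all such edges), so $\partial A$ is the disjoint union of the $\partial A_{i}$ and $\card(A)=\sum_{i}\card(A_{i})$. The mediant inequality then yields a component $A_{i}$ with $\card(\partial A_{i})/\card(A_{i})\le\card(\partial A)/\card(A)$, so it suffices to treat connected $A$.

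Now suppose $\Gamma_{H}[A]$ is connected. If it contains a cycle, I take $\Gamma:=\Gamma_{H}[A]$ and pass to its core by repeatedly deleting vertices of degree $\le1$; this is a deformation retraction onto the maximal subgraph of minimum degree $\ge2$, which I will identify with $C_{\Gamma}$. Each deletion of a degree-one vertex removes exactly one vertex and one edge, and since a connected graph carrying a cycle has positive first Betti number and hence satisfies $e(A)\ge\card(A)$, the elementary inequality $\frac{e-1}{\card(A)-1}\ge\frac{e}{\card(A)}$ shows the edge density does not decrease under this pruning; as the Betti number is preserved at every step, the density never decreases, so $C_{\Gamma}$ has density at least that of $A$ and therefore boundary ratio no larger. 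The remaining case is that $\Gamma_{H}[A]$ is a tree, where $e(A)=\card(A)-1$ forces $\frac{1}{2n}\frac{\card(\partial A)}{\card(A)}>\frac{n-1}{n}$; here the pruned core degenerates to a point and is useless, so instead I invoke the hypothesis $H\neq\{1\}$: since $\pi_{1}(\Gamma_{H})\cong H$ is nontrivial, $\Gamma_{H}$ contains an embedded cycle (possibly a loop or a pair of parallel edges), a finite connected subgraph equal to its own core whose edge density is exactly $1$ and whose boundary ratio is exactly $(n-1)/n$. This cycle-core beats every tree-set, which completes the reverse inequality and hence the lemma.

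The step I expect to be most delicate is matching the pruning construction with the stated definition of the core: I must check that the maximal minimum-degree-$\ge2$ subgraph obtained by iterated leaf-deletion is genuinely the minimal subgraph onto which $\Gamma$ deformation retracts, equivalently that it is contained in every subgraph whose inclusion is a homotopy equivalence, and that the degenerate tree case is correctly excluded precisely by the hypothesis $H\neq\{1\}$ through the existence of a cycle. Once these structural points are settled, the density monotonicity and the mediant reduction are routine.
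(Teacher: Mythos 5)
Your proof is correct and follows essentially the same route as the paper's: reduce to connected vertex sets by the mediant inequality, show the boundary ratio does not increase under leaf-pruning down to the core, and dispose of the tree case by comparing against a single embedded cycle (which exists since $H\neq\{1\}$ and realizes the ratio $(n-1)/n$). Your reformulation via the edge density $e(A)/\card(A)$ and the preservation of the first Betti number is just an algebraically equivalent, and arguably cleaner, way of stating the paper's inequality (3.1) together with its ``without loss of generality'' reduction to the case (3.2).
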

\begin{proof}
To prove that we can restrict to connected subgraphs, we use the fact
that, if $(a/b)\le(c/d)$ then $(a/b)\le\left(a+c\right)/(b+d)$.
To prove the reduction to finite core subgraphs, let $\Gamma\subset\Gamma_{H}$
be a finite connected subgraph and consider the core of $\Gamma$.
Note that the core is obtained from $\Gamma$ by successively removing
vertices $v\in\Gamma$ for which there exists only one $v'\in\Gamma$
such that $v\sim v'$. Hence, it suffices to show that 
\begin{equation}
\frac{1}{2n}\frac{\card(\partial\Gamma)-2(n-1)}{\card(\Gamma)-1}\le\frac{1}{2n}\frac{\card(\partial\Gamma)}{\card(\Gamma)}.\label{eq:reduction}
\end{equation}
We may assume that $\card(\Gamma)\ge2$. If 
\begin{equation}
\frac{1}{2n}\frac{\card(\partial\Gamma)}{\card(\Gamma)}\le\frac{n-1}{n},\label{eq:isoperi-estimate}
\end{equation}
then (\ref{eq:reduction}) follows from the fact that $(a-c)/(b-d)\le a/b$,
whenever $a/b\le c/d$ and $b>d$. If $i(\Gamma_{H})<(n-1)/n$ then
we may without loss of generality assume that (\ref{eq:isoperi-estimate})
holds. If $i(\Gamma_{H})=(n-1)/n$ then the lemma holds, because the
infimum is attained if we consider a single cycle, which defines a
core subgraph. 
\end{proof}
We denote by $\chi$ the Euler characteristic of a topological space.
\begin{lem}
\label{lem:eulerformula for coregraphs}If $C$ is a connected core
subgraph, then 
\[
\card(\partial C)=(2n-2)\card(C)+2\chi(C).
\]
\end{lem}
\begin{proof}
First observe that the formula holds when $C$ is a single loop, that
is $\chi(C)=0$. The general case follows by induction on the Euler
characteristic, because if we remove a cycle of edges or an edge path,
2 boundary elements appear and the Euler characteristic increases
by 1.\end{proof}
\begin{defn}
The injectivity radius of a connected graph $\Gamma$ is given by
\[
\ell(\Gamma):=\inf_{x\in\mathcal{V}(\Gamma)}\left\{ \ell_{x}(\Gamma)\right\} ,
\]
where we have set 
\[
\ell_{x}(\Gamma):=\frac{1}{2}\min\left\{ \mbox{length}(\gamma)\mid\gamma\mbox{ is non-backtracking edge path from }x\mbox{ to }x\right\} .
\]
Note that if $\Gamma=F_n$ then $\ell(\Gamma)=\infty$, and if a graph $C$ consists of a single vertex and no edge then  $\ell(C)=\infty$. Moreover, if $C$ is a subgraph of  $\Gamma$ then $\ell(C) \geq \ell(\Gamma)$.\end{defn}
\begin{prop}
\label{prop:subgraph lower bound via injectivity radius}Let $C\subset\Gamma_{H}$
be a core subgraph. Suppose that $\ell(C)<\infty$ and that $C$ is
planar. Then we have 
\[
\card(C)\ge\left(-\chi(C)+2\right)\cdot\left(\ell(C)-1\right).
\]
\end{prop}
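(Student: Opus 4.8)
The plan is to use planarity through Euler's formula, reading the quantity $-\chi(C)+2$ as the number of faces of a planar embedding of $C$; I would begin by disposing of the trivial range and normalizing the combinatorics. By definition $\ell(C)$ is half the length of a shortest non-backtracking closed edge path in $C$, so $g:=2\ell(C)$ is exactly the girth of $C$ (the length of a shortest cycle), which is finite by hypothesis. If $\ell(C)\le 1$ the right-hand side is nonpositive and there is nothing to prove; hence I may assume $\ell(C)\ge 3/2$, i.e.\ $g\ge 3$. In that range $C$ has neither a loop (which would force $\ell(C)=1/2$) nor a pair of multiple edges (which would force $\ell(C)\le 1$), so $C$ is a \emph{simple} graph. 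Being a core, $C$ has minimum degree at least $2$, and, arising as the core of a connected subgraph, it is connected and contains a cycle.

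Next I would invoke Euler's formula. Fixing an embedding of $C$ in the sphere and writing $V:=\card(C)$, $E$, $F$ for the numbers of vertices, edges and faces, connectivity gives $V-E+F=2$; since $\chi(C)=V-E$ this reads $F=-\chi(C)+2$. Thus the claim is equivalent to $\card(C)\ge F\,(\ell(C)-1)$, and everything reduces to bounding $E$ from below in terms of $g$ and $F$.

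The crux is the face-degree estimate $2E\ge gF$. Summing the lengths $\deg(f)$ of the boundary walks of all faces counts each edge twice, so $\sum_f\deg(f)=2E$; the point is that every face boundary contains a cycle and therefore has length at least the girth $g$. When $C$ is $2$-connected this is immediate, since each face is then bounded by a simple cycle. Combining $2E\ge gF=2\ell(C)F$ with $V=E-F+2$ gives
\[
\card(C)=E-F+2\ \ge\ \ell(C)F-F+2\ =\ (\ell(C)-1)F+2\ >\ (-\chi(C)+2)\,(\ell(C)-1),
\]
which is even slightly stronger than the asserted bound.

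The step I expect to be the main obstacle is justifying $\deg(f)\ge g$ when $C$ is not $2$-connected: a bridge is traversed twice by the boundary walk of the single face it bounds, and one must rule out that such a degenerate contribution produces a face of length below $g$. I would settle this by decomposing $C$ along its bridges into $2$-connected blocks — using that, since every vertex has degree at least $2$, each side of a bridge still contains a cycle — or, more expediently, by citing the classical planar girth inequality $E\le \tfrac{g}{g-2}(V-2)$, valid for any connected simple planar graph with a cycle. Substituting $\beta_1(C)=E-V+1$ together with the relation $-\chi(C)+2=\beta_1(C)+1=F$ into that inequality yields $2V\ge \beta_1(C)(g-2)+g+2$, which again exceeds $(\beta_1(C)+1)(g-2)=2\,(-\chi(C)+2)(\ell(C)-1)$ and hence gives the same conclusion.
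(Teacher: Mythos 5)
Your proof is correct and follows essentially the same route as the paper's: embed $C$ in the sphere, use Euler's formula to identify the number of faces with $-\chi(C)+2$, apply the face-degree bound $2E\ge 2\ell(C)\cdot\card\{\mbox{faces}\}$, and substitute back via $\card(C)=E+\chi(C)$. The only difference is that you explicitly justify the face-degree bound at bridges and dispose of the degenerate range $\ell(C)\le 1$, whereas the paper simply asserts that every face is bounded by at least $2\ell(C)$ edges.
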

\begin{proof}
Since $C$ is planar, we can consider $C$ as a subspace of the sphere
$S^{2}$. Hence, 
\[
\chi(C)+\card\left\{ \mbox{faces of }C\right\} =\chi(S^{2})=2,
\]
giving that 
\begin{equation}
\card\left\{ \mbox{faces of }C\right\} =-\chi(C)+2\label{eq:faces via euler}
\end{equation}
Since every edge of $C$ is between two faces and each face is bounded
by at least $2\ell(C)$ edges, we obtain 
\begin{align*}
2\card\left\{ \mbox{edges of }C\right\}  & \ge2\ell(C)\card\left\{ \mbox{faces of }C\right\} .
\end{align*}
Combining with (\ref{eq:faces via euler}) yields
\[
\card\left\{ \mbox{edges of }C\right\} \ge\ell(C)\left(-\chi(C)+2\right).
\]
Finally, we deduce for the number of vertices that 
\[
\card\left\{ \mbox{vertices of }C\right\} =\card\left\{ \mbox{edges of }C\right\} +\chi(C)\ge\left(\ell(C)-1\right)\left(-\chi(C)+2\right).
\]
\end{proof}
\begin{prop}
\label{prop:injectivity tending to infinity}Let $(H_{k})$ be a sequence
of non-trivial subgroups of $F_{n}$. 
\begin{enumerate}
\item Suppose that $\Gamma_{H_{k}}$ is planar for each $k\in\N$. If $\ell(\Gamma_{H_{k}})\rightarrow\infty$,
as $k\rightarrow\infty$,  then $\lim_{k\rightarrow\infty}i(\Gamma_{H_{k}})=i(T_{n})$. 
\item If $\lim_{k\rightarrow\infty}i(\Gamma_{H_{k}})=i(T_{n})$ and $H_{k}\triangleleft F_{n}$,
then 
\[
\lim_{k\rightarrow\infty}\delta(H_{k})=\frac{1}{2}\log(2n-1)=\delta(F_{n})/2.
\]
 
\item If $\lim_{k\rightarrow\infty}i(\Gamma_{H_{k}})=i(T_{n})$, then $\lim_{k\rightarrow\infty}\growth(\Gamma_{H_{k}})=2n-1=\growth(T_{n})$.
\end{enumerate}
\end{prop}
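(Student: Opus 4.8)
The plan is to prove the three parts in turn, with part (1) carrying essentially all of the geometric content; parts (2) and (3) then follow by inserting the limit $i(\Gamma_{H_k})\to i(T_n)=(n-1)/n$ into Mohar's inequalities and the formula of Proposition~\ref{prop:EPS}. For part (1) I would first invoke Lemma~\ref{lem:isoperimetric-reduction} to write $i(\Gamma_{H_k})$ as an infimum over finite connected core subgraphs $C$, and then substitute Lemma~\ref{lem:eulerformula for coregraphs} to obtain the identity
\[
\frac{1}{2n}\frac{\card(\partial C)}{\card(C)}=\frac{n-1}{n}+\frac{\chi(C)}{n\,\card(C)}.
\]
Thus the deviation of each ratio from $(n-1)/n$ is exactly the correction term $\chi(C)/(n\,\card(C))$, and the whole problem reduces to controlling the sign and size of $\chi(C)/\card(C)$.

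Taking $C$ to be a cycle (one exists since $H_k\neq\{1\}$, and then $\chi(C)\le0$) gives the upper bound $i(\Gamma_{H_k})\le(n-1)/n$ at once. For the matching lower bound I would split on the sign of $\chi(C)$: if $\chi(C)\ge0$ the correction term is nonnegative, while if $\chi(C)<0$ then $C$ is not a tree, so $\ell(C)<\infty$, and $C$ is planar as a subgraph of the planar $\Gamma_{H_k}$; Proposition~\ref{prop:subgraph lower bound via injectivity radius} then yields $\card(C)\ge(-\chi(C)+2)(\ell(C)-1)$, whence $-\chi(C)/\card(C)\le 1/(\ell(C)-1)\le 1/(\ell(\Gamma_{H_k})-1)$, using $\ell(C)\ge\ell(\Gamma_{H_k})$ for subgraphs. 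Taking the infimum over $C$ gives the uniform estimate $i(\Gamma_{H_k})\ge(n-1)/n-1/(n(\ell(\Gamma_{H_k})-1))$, and letting $\ell(\Gamma_{H_k})\to\infty$ closes part (1). I expect this to be the main obstacle: the lower bound must hold uniformly over \emph{all} core subgraphs simultaneously, and planarity is precisely the hypothesis that prevents $-\chi(C)/\card(C)$ from staying bounded away from zero; without it a sequence of core subgraphs with very negative Euler characteristic relative to their size could keep $i(\Gamma_{H_k})$ from approaching $i(T_n)$.

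For part (2) I would feed $i(\Gamma_{H_k})\to(n-1)/n$ into Proposition~\ref{prop:cheeger inequality} to get $\lambda_{0}(\Gamma_{H_k})\ge 1-\sqrt{1-i(\Gamma_{H_k})^{2}}\to 1-\sqrt{2n-1}/n=\lambda_{0}(T_n)$. Next, reading Proposition~\ref{prop:EPS} as expressing $\lambda_{0}(\Gamma_{H_k})$ as a function of $x_k:=\e^{\delta(H_k)}$, and using Roblin's bound $\delta(H_k)\ge\tfrac12\log(2n-1)$ from Proposition~\ref{prop:lowerbounds}(1) to place $x_k\ge\sqrt{2n-1}$, one checks that this function is strictly decreasing on $[\sqrt{2n-1},2n-1]$ with maximum value $\lambda_{0}(T_n)$, attained only at $x=\sqrt{2n-1}$. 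Hence $\lambda_{0}(\Gamma_{H_k})\le\lambda_{0}(T_n)$ for every $k$, so $\lambda_{0}(\Gamma_{H_k})\to\lambda_{0}(T_n)$, and strict monotonicity together with continuity forces $x_k\to\sqrt{2n-1}$, that is $\delta(H_k)\to\tfrac12\log(2n-1)=\delta(F_n)/2$.

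For part (3), Lemma~\ref{lem:growth-lowerbound} gives $\growth(\Gamma_{H_k})\ge(1+i(\Gamma_{H_k}))/(1-i(\Gamma_{H_k}))\to 2n-1$, providing the lower bound. For the reverse inequality I would use that the covering projection $T_n\to\Gamma_{H_k}$ maps balls onto balls, so that $\card(B_{\Gamma_{H_k}}(R))\le\card(B_{T_n}(R))$ for every radius $R$, and therefore $\growth(\Gamma_{H_k})\le\growth(T_n)=2n-1$. Combining the two bounds yields $\growth(\Gamma_{H_k})\to 2n-1=\growth(T_n)$, which is the least delicate of the three assertions.
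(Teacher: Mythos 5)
Your proposal is correct and follows essentially the same route as the paper: part (1) via Lemma \ref{lem:isoperimetric-reduction}, Lemma \ref{lem:eulerformula for coregraphs} and Proposition \ref{prop:subgraph lower bound via injectivity radius} to get the uniform bound $i(\Gamma_{H_k})\ge (n-1)/n - 1/(n(\ell(\Gamma_{H_k})-1))$, part (2) via the Cheeger-type inequality, Proposition \ref{prop:EPS} and Roblin's lower bound, and part (3) via Lemma \ref{lem:growth-lowerbound}. Your explicit case split on the sign of $\chi(C)$ and the monotonicity discussion in (2) only make precise steps the paper leaves implicit.
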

\begin{proof}
We first prove (1). By Lemma \ref{lem:isoperimetric-reduction}, Lemma
\ref{lem:eulerformula for coregraphs} and Proposition \ref{prop:subgraph lower bound via injectivity radius}
we have 
\begin{align*}
i(\Gamma_{H}) & =\inf\frac{1}{2n}\frac{\card(\partial C_{\Gamma})}{\card(C_{\Gamma})}=\inf\frac{1}{2n}\frac{(2n-2)\card(C_{\Gamma})+2\chi(C_{\Gamma})}{\card(C_{\Gamma})}\\
 & =\frac{n-1}{n}+\inf\frac{1}{n}\frac{\chi(C_{\Gamma})}{\card(C_{\Gamma})}\ge\frac{n-1}{n}+\inf\frac{1}{n}\frac{\chi(C_{\Gamma})}{\left(-\chi(C_{\Gamma})+2\right)\cdot\left(\ell(C_{\Gamma})-1\right)}\\
 & >\frac{n-1}{n}-\frac{1}{n\left(\ell(\Gamma_{H})-1\right)},
\end{align*}
where the infimum is taken over all finite connected subgraphs $\Gamma\subset\Gamma_{H}$
and $C_{\Gamma}$ denotes the core of $\Gamma$. Since $i(\Gamma_{H})\le i(T_{n})=(n-1)/n$,
the assertion in (1) follows. 

To prove (2) observe that by Proposition \ref{prop:cheeger inequality}
we have $\lambda_{0}(\Gamma_{H_{k}})\ge1-\sqrt{1-i(\Gamma_{H_{k}})^{2}}$.
Consequently, if $\lim_{k\rightarrow\infty}i(\Gamma_{H_{k}})=i(T_{n})=(n-1)/n$,
then $\liminf_{k\rightarrow\infty}\lambda_{0}(\Gamma_{H_{k}})\ge1-\sqrt{2n-1}\big/n$.
By Proposition \ref{prop:EPS} we conclude that $\limsup_{k\rightarrow\infty}\delta(H_{k})\le\log\sqrt{2n-1}$.
Since $H_{k}\triangleleft F_{n}$ we have $\delta(H_{k})\ge\log\sqrt{2n-1}$
by Proposition \ref{prop:lowerbounds}, which finishes the proof of
(2). 

Finally we turn to the proof of (3). By Proposition \ref{lem:growth-lowerbound}
we have as $k\rightarrow\infty$, 
\[
\growth(\Gamma_{H_{k}})\ge\frac{1+i(\Gamma_{H_{k}})}{1-i(\Gamma_{H_{k}})}\rightarrow\frac{1+(n-1)/n}{1-(n-1)/n}=2n-1=\growth(T_{n}).
\]

\end{proof}

\begin{proof}
[Proof of Theorem \ref{thm:main}] The first assertion follows from
Proposition \ref{prop:injectivity tending to infinity} (1) and (2).
The second assertion follows from Proposition \ref{prop:injectivity tending to infinity}
(1) and (3).
\end{proof}

\section{A relation between Growth and Cogrowth}

We prove a relation between growth and cogrowth for $N\triangleleft F_{n}$. 
\begin{prop}
\label{prop:growth cogrowth relation}For every non-trivial $N\triangleleft F_{n}$
we have 
\[
\delta(N)+\frac{1}{2}\log\left(\growth(T_{n}/N)\right)+\log(2)>\delta(F_{n}).
\]
\end{prop}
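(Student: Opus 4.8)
The plan is to combine Grigorchuk's cogrowth formula (\ref{eq:cogrowth-EPS}) with a direct lower bound for $\growth(\Gamma_N)$ in terms of the spectral radius $\rho=\rho(F_n/N)$. The key structural observation is that, since $N\triangleleft F_n$, the quotient $\Gamma_N=T_n/N$ is the (right) Cayley graph of $Q=F_n/N$ with respect to the images of $S$, hence vertex-transitive; fix $v_0=N\in\mathcal V(\Gamma_N)$. As $P=\frac{1}{2n}A$ is self-adjoint on $\ell^2(\mathcal V(\Gamma_N))$, we have $\rho=\|P\|$.

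First I would establish the inequality $\growth(\Gamma_N)\ge\rho^{-2}$. Writing $p_m(x,y)$ for the $m$-step transition probabilities and $B_m$ for the ball of radius $m$ about $v_0$, the walk from $v_0$ stays in $B_m$ up to time $m$, so $\sum_{x\in B_m}p_m(v_0,x)=1$, and Cauchy--Schwarz together with reversibility gives
\[
p_{2m}(v_0,v_0)=\sum_{x}p_m(v_0,x)^2\ge\frac{1}{\card(B_m)}.
\]
On the other hand $p_{2m}(v_0,v_0)=\|P^m\delta_{v_0}\|^2\le\|P\|^{2m}=\rho^{2m}$. Combining yields $\card(B_m)\ge\rho^{-2m}$ for every $m$, whence $\growth(\Gamma_N)=\limsup_m\card(B_m)^{1/m}\ge\rho^{-2}$.

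Next I would substitute the cogrowth formula. Setting $t:=\e^{\delta(N)}$ and $q:=2n-1=\e^{\delta(F_n)}$, formula (\ref{eq:cogrowth-EPS}) simplifies to $\rho=(q+t^2)/\bigl(t(q+1)\bigr)$, so the first step gives $\growth(\Gamma_N)\ge t^2(q+1)^2/(q+t^2)^2$. The target inequality, after exponentiating, reads $2t\sqrt{\growth(T_n/N)}>q$, so it suffices to verify $2t/\rho>q$, i.e.\ $2t^2(q+1)>q(q+t^2)$, which rearranges to the clean condition $t^2(q+2)>q^2$.

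The only non-elementary remaining input is Proposition \ref{prop:lowerbounds}(1): since $N\triangleleft F_n$ we have $\delta(N)\ge\delta(F_n)/2$, i.e.\ $t^2\ge q$. Then $t^2(q+2)\ge q(q+2)=q^2+2q>q^2$, and taking logarithms delivers the strict inequality $\delta(N)+\frac{1}{2}\log\bigl(\growth(T_n/N)\bigr)+\log 2>\delta(F_n)$. I expect the main obstacle to be the growth/spectral-radius bound $\growth(\Gamma_N)\ge\rho^{-2}$ of the first step, which is where vertex-transitivity and the return-probability estimate are used; everything afterwards is the cogrowth formula and the elementary bound $t^2\ge q$. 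I would also remark that this bound is lossy by essentially a factor $2$ (for the tree, $\rho(T_n)=2\sqrt q/(q+1)$ is about twice $q^{-1/2}$), which is exactly what produces the additive $\log 2$; sharpening it to remove that term is precisely the content of the stated conjecture.
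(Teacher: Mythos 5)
Your proof is correct and follows essentially the same route as the paper: the paper likewise combines the cogrowth formula (in the equivalent form of Proposition \ref{prop:EPS}) with the spectral bound $\growth(T_{n}/N)\ge(1-\lambda_{0})^{-2}=\rho^{-2}$ (which it simply cites as Corollary 5.2 of Mohar--Woess rather than reproving via return probabilities) and the lower bound on $\delta(N)$ from Proposition \ref{prop:lowerbounds}. The only noteworthy difference is that your algebra is organized more cleanly --- everything reduces to $\e^{2\delta(N)}(2n+1)>(2n-1)^{2}$, so the non-strict bound $\delta(N)\ge\delta(F_{n})/2$ already suffices --- whereas the paper's chain of nested radical estimates needs to invoke the strict inequality $\delta(N)>\delta(F_{n})/2$ at the last step.
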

\begin{proof}
For ease of notation we write $\delta=\delta(N)$, $\lambda_{0}=\lambda_{0}(T_{n}/N)$
and $\kappa=\log\left(\growth\left(T_{n}/N\right)\right).$ It follows
from Proposition \ref{prop:EPS} that 
\[
\e^{\delta}=n(1-\lambda_{0})+\sqrt{n^{2}(1-\lambda_{0})^{2}-(2n-1)}.
\]
By \cite[Corollary 5.2]{MR986363} we have 
\[
\e^{\kappa}\ge\frac{1}{\left(1-\lambda_{0}\right)^{2}}.
\]
We obtain 
\[
\e^{\delta}\ge n\e^{-\kappa/2}+\sqrt{n^{2}\e^{-\kappa}-(2n-1)}.
\]
Multiplying by $\e^{\kappa/2}$ yields 
\begin{equation}
\e^{\delta+\kappa/2}\ge n+\sqrt{n^{2}-(2n-1)\e^{\kappa}}.\label{eq:estimate 1}
\end{equation}
A short calculation shows that 
\begin{align*}
\sqrt{n^{2}-(2n-1)\e^{\kappa}} & =\sqrt{n^{2}-(2n-1)-(2n-1)(\e^{\kappa}-1)}\\
 & \ge\sqrt{n^{2}-(2n-1)}-\sqrt{(2n-1)(\e^{\kappa}-1)}\\
 & \ge n-1-\left(\e^{\delta(F_{n})}\e^{\kappa}\right)^{1/2}.
\end{align*}
Combining with (\ref{eq:estimate 1}) we see that 
\begin{align*}
\e^{\delta+\kappa/2} & \ge2n-1-\e^{\delta(F_{n})/2}\e^{\kappa/2}=\e^{\delta(F_{n})}-\e^{\delta(F_{n})/2}\e^{\kappa/2},
\end{align*}
which implies 
\[
\left(\e^{\delta}+\e^{\delta(F_{n})/2}\right)\e^{\kappa/2}\ge\e^{\delta(F_{n})}.
\]
Finally, since $\delta>\frac{1}{2}\delta(F_{n})$ we deduce that 
\[
2\e^{\delta}\e^{\kappa/2}>\e^{\delta(F_{n})},
\]
which finishes the proof.
\end{proof}

\def\cprime{$'$}
\providecommand{\bysame}{\leavevmode\hbox to3em{\hrulefill}\thinspace}
\providecommand{\MR}{\relax\ifhmode\unskip\space\fi MR }
% \MRhref is called by the amsart/book/proc definition of \MR.
\providecommand{\MRhref}[2]{%
  \href{http://www.ams.org/mathscinet-getitem?mr=#1}{#2}
}
\providecommand{\href}[2]{#2}

\end{document}